\newlength{\dhatheight}
\newcommand{\doublehat}[1]{%
    \settoheight{\dhatheight}{\ensuremath{\hat{#1}}}%
    \addtolength{\dhatheight}{-0.35ex}%
    \hat{\vphantom{\rule{1pt}{\dhatheight}}%
    \smash{\hat{#1}}}}
\newcommand{\smalldoublehat}[1]{%
    \settoheight{\dhatheight}{\ensuremath{\hat{#1}}}%
    \addtolength{\dhatheight}{-0.95ex}%
    \hat{\vphantom{\rule{1pt}{\dhatheight}}%
    \smash{\hat{#1}}}}
\newcommand{\hhatSS}[1]{%
    \settoheight{\dhatheight}{\ensuremath{\scriptscriptstyle{\hat{#1}}}}%
    \addtolength{\dhatheight}{0.05ex}%
    \hat{\vphantom{\rule{1pt}{\dhatheight}}%
    \smash{\hat{#1}}}}
\newtheorem{thm}{Theorem}
\newtheorem{remark}[thm]{Remark}
\newtheorem{lemma}{Lemma}
\newtheorem{claim}{Claim}
\renewcommand{\P}{\mathbb P}
\@date \else {\vskip3ex \centering\footnotesize\@date\par\vskip1ex}\fi
\else \@footnotetext{\@setdate}\fi}
\begin{document}

\author{R\'eka Szab\'o and Daniel Valesin}
\address{University of Groningen, Nijenborgh 9, 9747 AG Groningen, The Netherlands}
\email{r.szabo@rug.nl, d.rodrigues.valesin@rug.nl}
\date{March 18, 2019}

%\dedicatory{}
\title{Inhomogeneous percolation on ladder graphs}

 \begin{abstract}
We define an inhomogeneous percolation model on ``ladder graphs''  obtained as direct products of an arbitrary graph~$G = (V,E)$ and the set of integers~$\mathbb{Z}$ (vertices are thought of as having a ``vertical'' component indexed by an integer). We make two natural choices for the set of edges, producing an unoriented graph~$\mathbb{G}$ and an oriented graph~$\vec{\mathbb{G}}$.
These graphs are endowed with percolation configurations in which independently, edges inside a fixed infinite ``column'' are open with probability~$q$, and all other edges are open with probability~$p$.
For all fixed~$q$ one can define the critical percolation threshold~$p_c(q)$. We show that this function is continuous in~$(0, 1)$.
 \end{abstract}

\subjclass[2010]{MSC 60K35, MSC 82B43}
\keywords{Inhomogeneous percolation, Oriented percolation, Ladder graphs, Critical parameter}

\maketitle

\section{Introduction}
\label{sec:intro}

In this paper we examine how the critical parameter of percolation is affected by inhomogeneities. More specifically, we address the following problem.  Suppose~$\mathbb{G}$ is a graph with (oriented or unoriented) set of edges~$\mathbb{E}$, and that~$\mathbb{E}$ is split into two disjoint sets,~$\mathbb{E} = \mathbb{E}' \cup \mathbb{E}''$. Consider the percolation model in which edges of~$\mathbb{E}'$ are open with probability~$p$ and edges of~$\mathbb{E}''$ are open with probability~$q$. For~$q \in [0,1]$, we can then define~$p_c(q)$ as the supremum of values of~$p$ for which percolation does not occur at~$p,q$. What can be said about the function~$q \mapsto p_c(q)$?

This is the framework for the problem of interest of the recent reference~\cite{LRV17}. In that paper, the authors consider an oriented tree whose vertex set is that of the~$d$-regular, rooted tree, and containing ``short edges'' (with which each vertex points to its~$d$ children) and ``long edges'' (with which each vertex points to its~$d^k$ descendants at distance~$k$, for fixed~$k \in \mathbb{N}$). Percolation is defined on this graph by letting short edges be open with probability~$p$ and long edges with probability~$q$. It is proved that the curve~$q \mapsto p_c(q)$ is continuous and strictly decreasing in the region where it is positive.

In the present paper, we consider another natural setting for the problem described in the first paragraph, namely that of a ``ladder graph'' in the spirit of~\cite{GN90}.
We start with an arbitrary (unoriented, connected) graph~$G=(V,E)$ and construct~$\mathbb{G}=(\mathbb{V},\mathbb{E})$ by placing layers of~$G$ one on top of the other and adding extra edges to connect the consecutive layers.
More precisely,~$\mathbb{V} = V \times \mathbb{Z}$ and~$\mathbb{E}$ consists of the edges that make each individual layer a copy of~$G$, as well as edges linking each vertex to its copies in the layers above it and below it (see Figure~\ref{fig:genericex} for an example).
% We start with an arbitrary (unoriented, connected) graph~$G=(V,E)$ and take~$\mathbb{G}=(\mathbb{V},\mathbb{E})$ with~$\mathbb{V} = V \times \mathbb{Z}$ (we think of this as placing layers of~$V$ one on top of the other) and~$\mathbb{E}$ consisting of the edges that make each individual layer a copy of~$G$, as well as edges linking each vertex to its copies in the layers above it and below it. In this setting, the decomposition~$\mathbb{E} = \mathbb{E}' \cup \mathbb{E}''$ mentioned in the first paragraph can for instance be the following:~$\mathbb{E}''$ consists of the copies of an edge~$e$ of~$E$ in all the layers of~$\mathbb{G}$, and~$\mathbb{E}'$ consists of all other edges of~$\mathbb{E}$.
With this choice (and other ones we will also consider), one would expect the aforementioned function~$p_c(q)$ to be constant in~$(0,1)$. Our main result is that it is a continuous function. We also consider a similarly defined oriented model~$\vec{\mathbb{G}}$, and obtain the same result. See Section~\ref{ss:formal_intro} for a more formal description of the models we study and the results we obtain.

\begin{figure}[ht]
 \begin{center}
 \def\svgwidth{450pt}
 \input{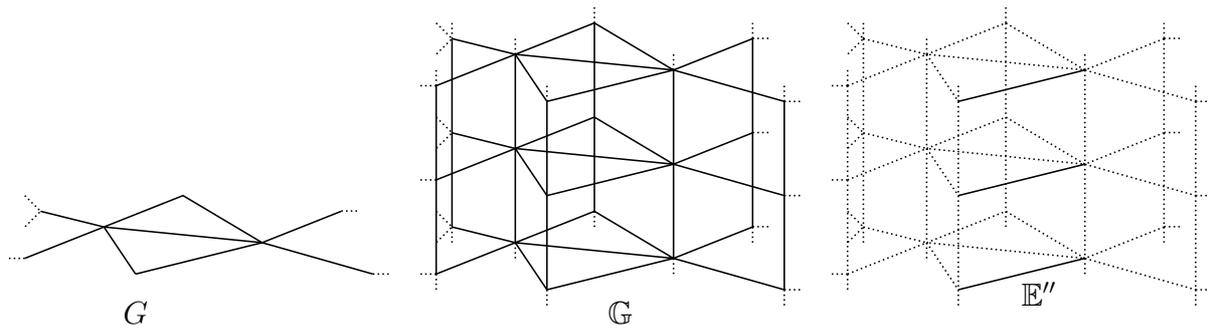}\qquad
 \caption{The construction of~$\mathbb{G}$ from~$G$ and a possible choice for the edge set~$\mathbb{E}''$ (on which edges are open with probability~$q$).}\label{fig:genericex}
 \end{center}
 \end{figure}
 
Our ladder graph percolation model is a generalization of the model of~\cite{Z94}. 
In that paper, Zhang considers an independent bond percolation model on~$\mathbb{Z}^2$ in which edges belonging to the vertical line through the origin are open with probability~$q$,
while other edges are open with probability~$p$.  It then follows from standard results in Percolation Theory that~$(0,1) \ni q \mapsto p_c(q)$ is constant, equal to~$\frac 1 2$, the critical value of (homogeneous) bond percolation on~$\mathbb{Z}^2$. The main result of~\cite{Z94} is that, when~$p$ is set to this critical value and for any~$q \in (0,1)$, there is almost surely no infinite percolation cluster. Since we are far from understanding the critical behaviour of homogeneous percolation on the more general graphs~$\mathbb{G}$ and~$\vec{\mathbb{G}}$ we consider here, analogous results to that of Zhang are beyond the scope of our work.

Let us  briefly mention some other related works. Important references for percolation phase transition beyond~$\mathbb{Z}^d$ are~\cite{BS96} and~\cite{LP16}; see also~\cite{CT17} for a recent development. Concerning sensitivity of the percolation threshold to an extra parameter or inhomogeniety of the underlying model, see the theory of essential enhancements developed in~\cite{AG91} and~\cite{BBR14}.

\subsection{Formal description of model and results}
\label{ss:formal_intro}
Let~$G=(V, E)$ be a connected graph with vertex set~$V$ and edge set~$E$.
Let~$\mathbb{V}=V\times \mathbb{Z}$. We define the unoriented graph~$\mathbb{G} = (\mathbb{V}, \mathbb{E})$ and the oriented graph~$\vec{\mathbb{G}} = (\mathbb{V}, \vec{\mathbb{E}})$, where 
\begin{align*}
\mathbb{E}=&\{\{(u,n), (v, n)\}: \{u, v\}\in E, n\in\mathbb{Z}\}\cup\{\{(u,n), (u, n+1)\}: u \in V, n\in\mathbb{Z}\},\\
\vec{\mathbb{E}}=&\{\langle(u,n), (v, n+1)\rangle: \{u, v\}\in E, n\in\mathbb{Z}\};
\end{align*}
above we denote unoriented edges by~$\{\cdot, \cdot\}$ and oriented edges by~$\langle\cdot, \cdot\rangle$.
See Figure~\ref{fig:graphs} for an example. Note that~$\vec{\mathbb{G}}$ is not necessarily connected.

\begin{figure}[ht]
\centering
 \begin{tikzpicture}

 \draw (1,0)  -- (4,0) ;
 \draw [dotted] (0.75,0)  -- (4.25,0) ;
 \draw (1,0.5)  -- (4,0.5) ;
 \draw [dotted] (0.75,0.5)  -- (4.25,0.5) ;
 \draw (1,1)  -- (4,1) ;
 \draw [dotted] (0.75,1)  -- (4.25,1) ;
 \draw (1,1.5)  -- (4,1.5) ;
 \draw [dotted] (0.75,1.5)  -- (4.25,1.5) ;
  \draw (1,2)  -- (4,2);
 \draw [dotted] (0.75,2)  -- (4.25,2) ;
 \draw (1,2.5)  -- (4,2.5) ;
 \draw [dotted] (0.75,2.5)  -- (4.25,2.5) ;
 \draw (1,3)  -- (4,3) ;
 \draw [dotted] (0.75,3)  -- (4.25,3) ;
 
 \draw (1,0)  -- (1,3) ;
 \draw [dotted] (1,-0.25)  -- (1,3.25) ;
  \draw (1.5,0)  -- (1.5,3) ;
 \draw [dotted] (1.5,-0.25)  -- (1.5,3.25) ;
  \draw (2,0)  -- (2,3) ;
 \draw [dotted] (2,-0.25)  -- (2,3.25) ;
  \draw (2.5,0)  -- (2.5,3) ;
 \draw [dotted] (2.5,-0.25)  -- (2.5,3.25) ;
   \draw (3,0)  -- (3,3) ;
 \draw [dotted] (3,-0.25)  -- (3,3.25) ;
  \draw (3.5,0)  -- (3.5,3) ;
 \draw [dotted] (3.5,-0.25)  -- (3.5,3.25) ;
  \draw (4,0)  -- (4,3) ;
 \draw [dotted] (4,-0.25)  -- (4,3.25) ;

\draw (2.5,-0.5)  node [below] {$\mathbb{G}$};

\draw (9,-0.5)  node [below] {$\vec{\mathbb{G}}$};

\foreach \n in {7.5,8.5,9.5}
	\foreach \m in {0,1,2}
		\draw [->] (\n,\m)-- ++(0.5,0.5);
\foreach \n in {7.5,8.5,9.5}
	\foreach \m in {1,2,3}
		\draw [<-] (\n,\m)-- ++(0.5,-0.5);
\foreach \n in {1,2,3}
	\foreach \m in {0,1, 2}
		\draw [->] (\n+7.5,\m)-- ++(-0.5,0.5);
\foreach \n in {1,2,3}
	\foreach \m in {1,2,3}
		\draw [<-] (\n+7.5,\m)-- ++(-0.5,-0.5);
		
\foreach \n in {7.5,8.5,9.5}
	\foreach \m in {0,1,2}
		\draw [dotted, ->] (\n+0.5,\m)-- ++(0.5,0.5);
\foreach \n in {7.5,8.5,9.5}
	\foreach \m in {1,2,3}
		\draw [dotted,<-] (\n+0.5,\m)-- ++(0.5,-0.5);
\foreach \n in {0, 1,2}
	\foreach \m in {0,1, 2}
		\draw [dotted,->] (\n+8,\m)-- ++(-0.5,0.5);
\foreach \n in {0,1,2}
	\foreach \m in {1,2,3}
		\draw [dotted,<-] (\n+8,\m)-- ++(-0.5,-0.5);

\end{tikzpicture}
 \caption{$\mathbb{G}$ and $\vec{\mathbb{G}}$ for $G=\mathbb{Z}$. Note that in this case, $\vec{\mathbb{G}}$ consists of two disjoint subgraphs; for clarity we will only display one of these subgraphs further on.} \label{fig:graphs}
\end{figure}
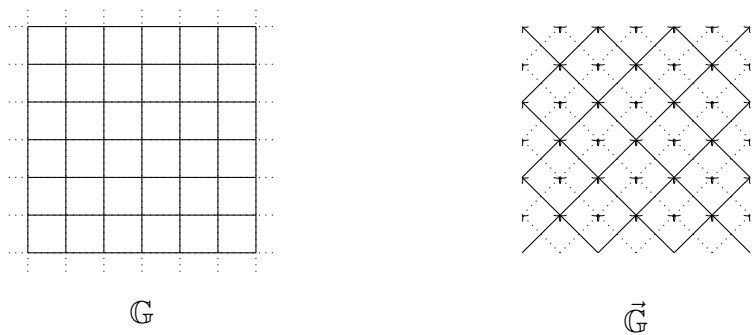

We consider percolation configurations in which each edge in~$\mathbb{E}$ and~$\vec{\mathbb{E}}$ can be \textit{open} or \textit{closed}.
Let~$\Omega=\{0, 1\}^{\mathbb{E}}$ and~$\vec{\Omega}=\{0, 1\}^{\vec{\mathbb{E}}}$ be the sets of all possible configurations on~$\mathbb{G}$ and~$\vec{\mathbb{G}}$, respectively. 
Then for any~$e \in \mathbb{E}$ or~$\vec{\mathbb{E}}$,~$\omega(e)=1$ corresponds to the edge being open and~$\omega(e)=0$ to closed.

An \textit{open path} on~$\mathbb{G}$ is a set of distinct vertices~$(v_0, n_0), (v_1, n_1), \dots, (v_m, n_m)$ such that for every~$i=0, \dots, m-1$,~$\{(v_i, n_i),(v_{i+1}, n_{i+1})\}\in \mathbb{E}$ and is open.
We say that~$(v, n)$ can be reached from~$(v_0, n_0)$ either if they are equal or if there is an open path from~$(v_0, n_0)$ to~$(v, n)$. Denote this event by~$(v_0, n_0)\leftrightarrow(v, n)$.
The set of vertices that can be reached from~$(v, n)$ is called the \textit{cluster} of~$(v, n)$.

An \textit{open path} on~$\vec{\mathbb{G}}$ can be defined similarly, but since edges are oriented upwards,~$(v, n)$ can only be reached from~$(v_0, n_0)$ if~$n\geq n_0$.
Denote this event by~$(v_0, n_0)\rightarrow(v, n)$.
Hence we will call the set of vertices that can be reached by an open path from~$(v, n)$ the \textit{forward cluster} of~$(v, n)$.
Denote by~$C_{\infty}$ and~$\vec{C}_{\infty}$ the events that there is an infinite cluster on~$\mathbb{G}$ and an infinite forward cluster on~$\vec{\mathbb{G}}$ respectively.

We examine the following inhomogeneous percolation setting.
First consider the unoriented graph~$\mathbb{G}$. Fix finitely many edges and vertices 
\begin{equation} \label{eq:set_of_edges} e_1=\{u_1, v_1\}, \dots, e_K=\{u_K, v_K\}\in E, \quad w_1, \dots w_L\in V\end{equation} and let 
\begin{align}
&\mathbb{E}^i:=\{\{(u_i,n), (v_i, n)\}: n\in\mathbb{Z}\} \hspace{20pt} i=1, \dots, K; \label{eq:einonoriented1}\\
&\mathbb{E}^{K+j}:=\{\{(w_j,n), (w_j, n+1)\}: n\in\mathbb{Z}\} \hspace{20pt} j=1, \dots, L; \label{eq:einonoriented2}
\end{align}
that is the set of ``horizontal'' edges on~$\mathbb{G}$ between~$u_i$ and~$v_i$, and the set of ``vertical'' edges above and below vertex~$w_j$ respectively (see Figure~\ref{fig:edgesets} for an example).
Further let~$\bold{q}=(q_1, \dots, q_{K+L})$ with~$q_i\in(0,1)$ for all~$i$ and let~$p\in[0, 1]$.
Now let each edge of~$\mathbb{E}^i$ be open with probability~$q_i$, and each edge in~$\mathbb{E}\setminus\cup_{i=1}^{K+L}\mathbb{E}^i$ be open with probability~$p$.
Denote the law of the open edges by~$\mathbb{P}_{\bold{q}, p}$.
Whether or not the event~$C_{\infty}$ happens with positive probability depends on the parameters~$p$ and~$\bold{q}$, so we can define the \textit{critical parameter} as a function of~$\bold{q}$:
\[
p_c(\bold{q}):=\sup\{p:\mathbb{P}_{\bold{q}, p}(C_{\infty})=0\}.                                                                                
\]
We will show that this function is continuous:

\begin{thm}\label{thm:percmulti}
For fixed~$K, L\in\mathbb{N}$, the function~$\bold{q}\mapsto p_c(\bold{q})$ is continuous in~$(0,1)^{K+L}$.
\end{thm}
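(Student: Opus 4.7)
The standard monotone coupling of Bernoulli product measures shows that $\mathbf{q}\mapsto\mathbb{P}_{\mathbf{q},p}(C_\infty)$ is non-decreasing in each coordinate, so $\mathbf{q}\mapsto p_c(\mathbf{q})$ is non-increasing in each coordinate. This immediately gives ``half'' of the desired continuity at each $\mathbf{q}_0\in(0,1)^{K+L}$: whenever $\mathbf{q}_n\to\mathbf{q}_0$ with $\mathbf{q}_n\ge\mathbf{q}_0$ componentwise, $\limsup p_c(\mathbf{q}_n)\le p_c(\mathbf{q}_0)$, and similarly for $\mathbf{q}_n\le\mathbf{q}_0$. The content of the theorem is the reverse inequalities, which rule out a jump of $p_c$ under an infinitesimal perturbation of $\mathbf{q}$.

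I would derive these from the following sprinkling lemma: \emph{for every $\mathbf{q}_0\in(0,1)^{K+L}$ and every $\eta>0$ there exists $\delta>0$ such that $\mathbb{P}_{\mathbf{q}_0,p}(C_\infty)>0$ implies $\mathbb{P}_{\mathbf{q}_0-\delta\mathbf{1},\,p+\eta}(C_\infty)>0$, and dually $\mathbb{P}_{\mathbf{q}_0,p}(C_\infty)=0$ implies $\mathbb{P}_{\mathbf{q}_0+\delta\mathbf{1},\,p-\eta}(C_\infty)=0$.} Granted this, the nontrivial direction $\limsup_{\mathbf{q}\uparrow\mathbf{q}_0}p_c(\mathbf{q})\le p_c(\mathbf{q}_0)$ follows: for any $p>p_c(\mathbf{q}_0)$, pick $p'\in(p_c(\mathbf{q}_0),p)$, set $\eta=p-p'$, apply the supercritical clause to obtain $\delta>0$, and conclude $\mathbb{P}_{\mathbf{q}_0-\delta\mathbf{1},p}(C_\infty)>0$; then monotonicity gives $p_c(\mathbf{q})\le p$ on the full left-neighborhood $\{\mathbf{q}\ge\mathbf{q}_0-\delta\mathbf{1}\}$ of $\mathbf{q}_0$. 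Sending $p\downarrow p_c(\mathbf{q}_0)$ and invoking the subcritical clause symmetrically for the other side completes the proof of continuity.

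For the sprinkling lemma itself I would use the standard uniform-variables coupling of the two product measures: assign each edge an independent $U_e\sim\mathrm{Uniform}[0,1]$ and declare $e$ open under parameters $(\mathbf{q},p)$ iff $U_e$ is below the corresponding threshold. Going from $(\mathbf{q}_0,p)$ to $(\mathbf{q}_0-\delta\mathbf{1},p+\eta)$, each column edge is then independently ``lost'' with probability $\delta$ while each bulk edge is independently ``gained'' with probability $\eta$. The key geometric input is that, because $G$ is connected and only finitely many columns are special, every column edge admits a uniformly bounded-length detour in $\mathbb{G}$ built from bulk edges. Exploiting the $\mathbb{Z}$-translation invariance of the model, I would then run a block renormalization in the vertical direction: partition $\mathbb{Z}$ into consecutive intervals of length $M$, declare such an interval ``good'' if every lost column edge within it admits an open detour through gained bulk edges inside a bounded enlargement of the block, and show that $\mathbb{P}(\mathrm{good})\to 1$ as $\delta\downarrow 0$ with $\eta,M$ fixed. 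Since ``good'' is a $1$-dependent event on $\mathbb{Z}$, a standard domination argument produces an infinite run of good blocks along which any infinite cluster of the original configuration can be rerouted into an infinite cluster of the perturbed configuration.

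The main obstacle is the detour construction in degenerate graph configurations: when some $e_i$ is a bridge of $G$, or when a special vertex $w_j$ coincides with an endpoint of some $e_i$, the shortest detours themselves pass through edges of $\bigcup_i\mathbb{E}^i$, so one must route detours through several heights using vertical bulk edges to reach layers where a horizontal detour exists, all while keeping the block events $1$-dependent. Making this construction uniform across arbitrary connected base graphs $G$ and admissible choices of special edges and vertices is where the technical effort of the proof is concentrated.
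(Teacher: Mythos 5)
Your reduction of the theorem to a sprinkling/comparison lemma is exactly the paper's strategy: your lemma is (a one-sided version of) Claim~\ref{claim:pqsurv}, and your deduction of continuity from it via monotonicity coincides with the paper's short argument following that claim. The gap is in your proof of the lemma itself. Your coupling loses each special edge independently with probability $\delta$, and you then run a one-dimensional block renormalization along the vertical direction, concluding with ``an infinite run of good blocks''. But the blocks are indexed by $\mathbb{Z}$ and each is bad with a fixed positive probability once $\delta>0$ is fixed, so (by Borel--Cantelli, even after Liggett--Schonmann--Stacey domination of the $1$-dependent field by a product measure) almost surely \emph{every} run of good blocks is finite. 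Since the special edges all live in the single column $U\times\mathbb{Z}$, there is no higher-dimensional renormalized lattice to percolate on, and an infinite cluster of $\omega$ that traverses the column vertically must pass through infinitely many bad blocks; the rerouting argument gives you nothing there. The difficulty you flag yourself is also real and unresolved: if $e_i$ is a bridge of $G$, every path in $\mathbb{G}$ joining $(u_i,n)$ to $(v_i,n)$ must use some edge of $\mathbb{E}^i$, so no detour through ``gained bulk edges'' exists at all, and in a bad block the lost connection may be essential to the infinite cluster. This is the essential-enhancement-type obstruction that an independent edgewise coupling cannot circumvent.

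The paper avoids all of this by never losing edges independently. It first reduces (Claim~\ref{claim:contmm-1}, a fixed-point argument in an added dummy parameter) to the case where the special edges and vertices are exactly those of a ball $U=B_r(u_0)$, and then couples the two measures \emph{block by block} using a Doeblin-type maximal coupling (Lemma~\ref{lemma:coupling1}): with probability close to $1$ the block configurations agree (and then $\omega\le\omega'$ because the only remaining discrepancy is a monotone sprinkling of boundary edges), and on the rare event that they disagree, the discrepancy is concentrated on a single deterministic configuration $\bar{x}$ chosen so that $(\bar x^U,\bar x^{\partial,1})$ connects no two distinct boundary vertices of the block while $(\bar x^U,\bar x^{\partial,1}\vee\bar x^{\partial,2})$ connects the entire block boundary. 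This gives $C_n(A,\omega_n)\subseteq C_n(A,\omega'_n)$ for every block $n$ and every boundary set $A$ \emph{almost surely} -- including in the ``bad'' blocks -- which is the local domination property your scheme lacks and which is what actually makes the infinite cluster transfer from $\omega$ to $\omega'$. To repair your proof you would need to replace the good/bad dichotomy by a coupling that controls the connectivity function in every block, which is essentially to rediscover the paper's construction.
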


\begin{figure}[ht]
\centering
 \begin{tikzpicture}

 \draw (2,0)  -- (2.5,0) ;
 \draw [dotted] (0.75,0)  -- (4.25,0) ;
 \draw (2,0.5)  -- (2.5,0.5) ;
 \draw [dotted] (0.75,0.5)  -- (4.25,0.5) ;
 \draw (2,1)  -- (2.5,1) ;
 \draw [dotted] (0.75,1)  -- (4.25,1) ;
 \draw (2,1.5)  -- (2.5,1.5) ;
 \draw [dotted] (0.75,1.5)  -- (4.25,1.5) ;
  \draw (2,2)  -- (2.5,2);
 \draw [dotted] (0.75,2)  -- (4.25,2) ;
 \draw (2,2.5)  -- (2.5,2.5) ;
 \draw [dotted] (0.75,2.5)  -- (4.25,2.5) ;
 \draw (2,3)  -- (2.5,3) ;
 \draw [dotted] (0.75,3)  -- (4.25,3) ;
 
% \draw (1,0)  -- (1,3) ;
 \draw [dotted] (1,-0.25)  -- (1,3.25) ;
 % \draw (1.5,0)  -- (1.5,3) ;
 \draw [dotted] (1.5,-0.25)  -- (1.5,3.25) ;
 % \draw (2,0)  -- (2,3) ;
 \draw [dotted] (2,-0.25)  -- (2,3.25) ;
 % \draw (2.5,0)  -- (2.5,3) ;
 \draw [dotted] (2.5,-0.25)  -- (2.5,3.25) ;
   \draw (3,0)  -- (3,3) ;
 \draw [dotted] (3,-0.25)  -- (3,3.25) ;
 % \draw (3.5,0)  -- (3.5,3) ;
 \draw [dotted] (3.5,-0.25)  -- (3.5,3.25) ;
 % \draw (4,0)  -- (4,3) ;
 \draw [dotted] (4,-0.25)  -- (4,3.25) ;

\draw (2.5,-0.5)  node [below] {$\mathbb{G}$};
\draw (2.25,3.25)  node [above] {$\mathbb{E}^1$};
\draw (3,3.25)  node [above] {$\mathbb{E}^2$};

 \foreach \n in {2.5,9}
	\draw (\n,-0.1)  -- ++(0,0.2) ;
\draw (2.5,-0.1)  node [below] {0};
\draw (9,-0.1)  node [below] {0};
 
\draw (9,-0.5)  node [below] {$\vec{\mathbb{G}}$};
\draw (8.75,3.25)  node [above] {$\mathbb{E}^1$};
\draw (9.75,3.25)  node [above] {$\mathbb{E}^2$};

\foreach \n in {7.5,8.5,9.5}
	\foreach \m in {0,1,2}
		\draw [dotted, ->] (\n,\m)-- ++(0.5,0.5);
\foreach \n in {7.5,8.5,9.5}
	\foreach \m in {1,2,3}
		\draw [dotted, <-] (\n,\m)-- ++(0.5,-0.5);
\foreach \n in {1,2,3}
	\foreach \m in {0,1, 2}
		\draw [dotted, ->] (\n+7.5,\m)-- ++(-0.5,0.5);
\foreach \n in {1,2,3}
	\foreach \m in {1,2,3}
		\draw [dotted, <-] (\n+7.5,\m)-- ++(-0.5,-0.5);
		
\foreach \n in {7.5,8.5,9.5}
	\foreach \m in {0,1,2}
		\draw [dotted, ->] (\n+0.5,\m)-- ++(0.5,0.5);
\foreach \n in {7.5,8.5,9.5}
	\foreach \m in {1,2,3}
		\draw [dotted,<-] (\n+0.5,\m)-- ++(0.5,-0.5);
\foreach \n in {0, 1,2}
	\foreach \m in {0,1, 2}
		\draw [dotted,->] (\n+8,\m)-- ++(-0.5,0.5);
\foreach \n in {0,1,2}
	\foreach \m in {1,2,3}
		\draw [dotted,<-] (\n+8,\m)-- ++(-0.5,-0.5);

\foreach \n in {8.5,9.5}
	\foreach \m in {0,1,2}
		\draw [->] (\n,\m)-- ++(0.5,0.5);
\foreach \n in {8.5,9.5}
	\foreach \m in {1,2,3}
		\draw [<-] (\n,\m)-- ++(0.5,-0.5);
\foreach \n in {8.5,9.5}
	\foreach \m in {0,1,2}
		\draw [->] (\n+0.5,\m)-- ++(-0.5,0.5);
\foreach \n in {8.5,9.5}
	\foreach \m in {1,2,3}
		\draw [->] (\n,\m-0.5)-- ++(0.5,0.5);

\end{tikzpicture}
 \caption{The edge sets $\mathbb{E}^1$ and $\mathbb{E}^2$ on $\mathbb{G}$ with~$e_1=\{-1, 0\}$ and $w_1=1$; and on $\vec{\mathbb{G}}$ with~$e_1=\{-1, 0\}$ and $e_2=\{1, 2\}$ (for $G=\mathbb{Z}$).} \label{fig:edgesets}
\end{figure}
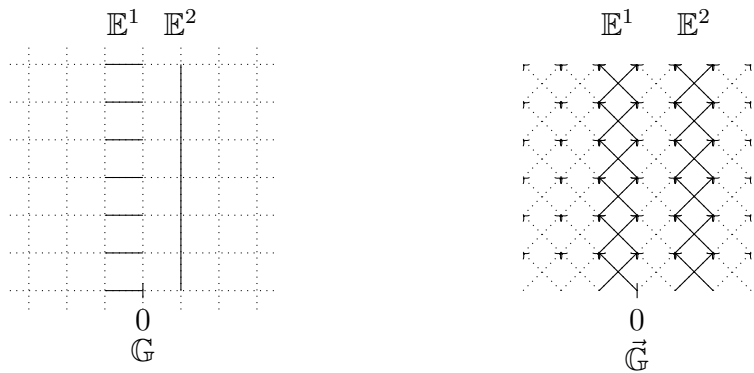

We now turn to the oriented graph~$\vec{\mathbb{G}}$.
Fix finitely many edges 
\begin{equation} \label{eq:edges2}
e_1=\{u_1, v_1\}, \dots, e_K=\{u_K, v_K\}\in E
\end{equation}
 and let 
\begin{equation}\label{eq:eioriented}
\vec{\mathbb{E}}^i:=\{\langle(u_i,n), (v_i, n+1)\rangle, \langle(v_i,n), (u_i, n+1)\rangle: n\in\mathbb{Z}\};
\end{equation}
that is the set of oriented edges on~$\vec{\mathbb{G}}$ between~$u_i$ and~$v_i$ (see Figure~\ref{fig:edgesets} for an example).
Further let~$\bold{q}=(q_1, \dots, q_K)$ with~$q_i\in(0,1)$ for all~$i$ and let~$p\in[0, 1]$.
Now let each oriented edge of~$\vec{\mathbb{E}}^i$ be open with probability~$q_i$, and each oriented edge in~$\vec{\mathbb{E}}\setminus\cup_{i=1}^K\vec{\mathbb{E}}^i$ be open with probability~$p$.
Denote the law of the open edges by~$\vec{\mathbb{P}}_{\bold{q}, p}$.
Similarly as in the unoriented case we can define the critical parameter as a function of~$\bold{q}$:
\[
\vec{p_c}(\bold{q}):=\sup\{p:\vec{\mathbb{P}}_{\bold{q}, p}(\vec{C}_{\infty})=0\}.                                                                                
\]
We will show that this function is continuous:

\begin{thm}\label{thm:orpercmulti}
For fixed~$K\in\mathbb{N}$, the function~$\bold{q}\mapsto \vec{p_c}(\bold{q})$ is continuous in~$(0,1)^K$.
\end{thm}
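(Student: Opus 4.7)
My plan is to deduce continuity of $\vec{p_c}$ from openness of both the supercritical region $\mathcal{S}^+:=\{(\bold{q},p)\in(0,1)^K\times[0,1] : \vec{\mathbb{P}}_{\bold{q},p}(\vec{C}_\infty)>0\}$ and its complement in $(0,1)^K\times[0,1]$: openness of $\mathcal{S}^+$ yields upper semi-continuity of $\vec{p_c}$ and openness of the subcritical region yields lower semi-continuity, hence continuity on $(0,1)^K$. The standard coupling by i.i.d.\ uniform $(0,1)$ variables attached to the edges gives that $\vec{\mathbb{P}}_{\bold{q},p}(\vec{C}_\infty)$ is coordinatewise monotone in $(\bold{q},p)$, so $\vec{p_c}$ is coordinatewise non-increasing; combined with the coordinatewise max/min squeeze this also lets one reduce the two openness statements, if convenient, to sequences approaching the point of interest entirely from above or entirely from below in the product order on $(0,1)^K$.

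For openness of the supercritical phase I would use a static renormalization / block argument of Bezuidenhout--Grimmett type, adapted to oriented percolation with defects. Given $(\bold{q}_0,p_0)\in\mathcal{S}^+$, I would exploit the a.s.\ existence of an infinite forward cluster together with vertical translation invariance to construct an increasing finite-box event $A$, supported on the edges inside a sufficiently large finite box $B\subset\vec{\mathbb{G}}$, such that $\vec{\mathbb{P}}_{\bold{q}_0,p_0}(A)>1-\epsilon$ and such that the co-occurrence of $A$ on sufficiently many vertical translates of $B$ can be chained to produce infinite open paths, via stochastic domination by a supercritical $k$-dependent oriented percolation on $\mathbb{Z}$ in the style of Liggett--Schonmann--Stacey. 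Because $A$ involves only finitely many edges, $(\bold{q},p)\mapsto\vec{\mathbb{P}}_{\bold{q},p}(A)$ is polynomial and therefore continuous, so the block condition (and hence percolation) persists in a neighborhood of $(\bold{q}_0,p_0)$.

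For openness of the subcritical phase---the more delicate direction---I would aim to produce a finite-box ``blocking'' criterion, continuous in $(\bold{q},p)$, that rules out percolation. A key structural fact is that the defect set $\bigcup_i \vec{\mathbb{E}}^i$ consists of finitely many one-dimensional columns: since each $q_i<1$, oriented percolation confined to a single column cannot a.s.\ sustain an infinite path, so any infinite forward cluster must repeatedly exit the columns into the homogeneous bulk. Given $(\bold{q}_0,p_0)$ subcritical, one would then quantify how far such bulk excursions between consecutive uses of defect edges can extend---a Menshikov / sharpness-type finite-size criterion adapted to the ladder structure---and encode this as a finite-box event of probability $>1-\delta$ whose occurrence in enough translates precludes percolation; continuity of finite-box probabilities then propagates the criterion to a neighborhood, giving openness. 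The main obstacle, I expect, lies precisely here: establishing the finite-size subcritical criterion without invoking a general sharpness theorem for inhomogeneous oriented percolation on arbitrary base graphs $G$, which forces an argument that genuinely exploits the confinement of the inhomogeneity to a low-dimensional defect and the ladder structure of $\vec{\mathbb{G}}$.
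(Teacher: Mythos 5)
Your reduction of continuity to openness of the super- and subcritical regions is logically sound (with the caveat that openness of $\{\vec{\mathbb{P}}_{\bold{q},p}(\vec{C}_\infty)>0\}$ as you state it would in particular imply absence of percolation at criticality, which is unknown here; you only need openness at points strictly above the critical curve). The genuine gap is that both openness statements you then propose to prove are, for an arbitrary connected base graph $G$, essentially open problems, and the tools you invoke do not apply. A Bezuidenhout--Grimmett style block construction relies on translation invariance of the ambient lattice in the spatial directions (together with FKG-driven steering of the renormalized paths); here $\vec{\mathbb{G}}$ is translation invariant only in the vertical direction and $G$ is arbitrary, so there is no renormalized process to dominate and no known finite-box characterization of supercriticality. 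Symmetrically, no Menshikov/sharpness-type finite-size criterion is available for inhomogeneous oriented percolation on these ladder graphs; you flag this as the main obstacle, but it is not a technical refinement away --- it is precisely the kind of control of near-critical behaviour that the paper explicitly states is out of reach ("we are far from understanding the critical behaviour of homogeneous percolation on the more general graphs $\mathbb{G}$ and $\vec{\mathbb{G}}$").

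The paper avoids criticality altogether. The key statement is a direct comparison (Claim~\ref{claim:orpqsurv}): for any $p$, $\epsilon$ and $\bold{q_0}$ there is $\delta>0$ such that $\vec{\mathbb{P}}_{\bold{q},p}(\vec{C}_\infty)\le\vec{\mathbb{P}}_{\bold{q'},p+\epsilon}(\vec{C}_\infty)$ whenever $\bold{q},\bold{q'}$ are $\delta$-close to $\bold{q_0}$; continuity of $\vec{p_c}$ follows immediately by applying this at $p=\vec{p_c}(\bold{q_0})\pm\epsilon$. The comparison is proved by a surgery coupling: after reducing (via a fixed-point argument on the monotone function $t\mapsto \vec{p_c}(\bold{q},t)$) to the case where the distinguished edges are all edges inside a ball $U$, one tiles the defect columns by disjoint vertical boxes and couples the two configurations box by box using a Doeblin-type lemma with two distinguished deterministic configurations (Lemma~\ref{lemma:coupling2}). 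On the rare event where the coupling fails to match the box interiors, the discrepancy is absorbed by the extra $\epsilon$ of openness on the boundary edges surrounding the box, which with the designated configurations either disconnects the box entirely or connects its whole boundary, so that boundary-to-boundary connectivity is always preserved. You correctly identified the structural fact that makes the theorem tractable --- the inhomogeneity is confined to finitely many columns --- but the way to exploit it is a perturbative coupling that trades a small change in $\bold{q}$ for a small increase in $p$, not a renormalization or sharpness argument. As written, your proof cannot be completed without resolving problems substantially harder than the theorem itself.
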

The proofs of both Theorem~\ref{thm:percmulti} and Theorem~\ref{thm:orpercmulti} rely on two coupling results which allow us to compare percolation configurations with different parameters~$\bold{q}, p$. These coupling results are presented in Section~\ref{s:prelim}. We prove Theorem~\ref{thm:percmulti} in Section~\ref{s:proof1}  and Theorem~\ref{thm:orpercmulti} in Section~\ref{s:proof2}.

\subsection{Discussion on the contact process} \label{ss:contact} 
Bond percolation on  the oriented graph~$\vec{\mathbb{G}}$ defined from~$G = (V,E)$ is closely related to the contact process on~$G$.
The contact process is usually taken as a model of epidemics on a graph: vertices are individuals, which can be healthy or infected.
In the continuous-time Markov dynamics infected individuals recover with rate~1 and transmit the infection to each neighbor with rate~$\lambda > 0$ (``infection rate'').
The ``all healthy'' configuration is a trap state for the dynamics; the probability that the contact process ever reaches this state is either equal to~1 or strictly less than~1 for any
finite set of initially infected vertices. The process is said to die out in the first case and
to survive in the latter. Whether it survives or dies out will depend on both the
underlying graph~$G$ and~$\lambda$, so one defines the critical rate~$\lambda_c$ as the supremum of the
infection parameter values for which the contact process dies out on~$G$. For a detailed introduction see~\cite{L13}.

The contact process admits a well-known graphical construction that is a ``space-time picture''~$G\times[0, \infty)$ of the process. We assign to each vertex~$v\in V$ and ordered pair of vertices~$(u, v)$ satisfying~$\{u, v\}\in E$ a Poisson point process~$D_v$ with rate~1 and~$D_{(u, v)}$ with rate~$\lambda$ respectively (all processes are independent). For each event time~$t$ of~$D_v$ we place a ``recovery mark'' at~$(v, t)$ and for each event time of~$D_{(u, v)}$ an ``infection arrow'' from~$(u, t)$ to~$(v, t)$. 
An ``infection path'' is a connected path that moves along the timeline in the increasing time direction, without passing through a recovery mark and along infection arrows in the direction of the arrow.
Starting from a set of initially infected vertices~$A\subset V$, the set of infected vertices at time~$t$ is the set of vertices~$v$ such that~$(v, t)$ can be reached by an infection path from some~$(u, 0)$ with~$u\in A$.

This representation can be thought of as a version of our oriented percolation model~$\vec{\mathbb{G}}$ in which the ``vertical'', one-dimensional component is taken as~$\mathbb{R}$ rather than~$\mathbb{Z}$ (some other modifications have to be made to account for the ``recovery marks'' of the contact process, but this is unimportant for the present discussion). In fact, one of the questions that originally motivated us was the following. Assume we take the contact process on an arbitrary graph~$G$, and declare that the infection rate is equal to~$\lambda > 0$ in every edge except for a distinguished edge~$e^*$, in which the infection rate is~$\sigma > 0$. Let~$\lambda_c(\sigma)$ be the supremum of values of~$\lambda$ for which the process with parameters~$\lambda, \sigma$ dies out (starting from finitely many infections). Is it true that~$\lambda_c(\sigma)$ is constant, or at least continuous, in~$(0,\infty)$?

In case~$G$ is a vertex-transitive connected graph, one can show that~$\lambda_c(\sigma)$ is constant in~$(0,\infty)$ by an argument similar to the one given in~\cite{J05}. For general~$G$, even continuity of~$\lambda_c(\sigma)$ is unproved, and the techniques we use here do not seem to be sufficient to handle that case
(see Remark~\ref{rem:contact_process} below for an explanation of what goes wrong).
 This is surprising, since results for oriented percolation typically transfer automatically to the contact process (and vice-versa).  A recent result shows that the situation can be quite delicate: in~\cite{SV16}, we exhibited a tree in which the contact process (with same rate~$\lambda > 0$ everywhere) survives for any value of~$\lambda$, but in which the removal of a single edge produces two subtrees in which the process dies out for small~$\lambda$.

\section{Coupling lemmas}\label{s:prelim}

The proofs of both our theorems are based on couplings which allow us to carefully compare percolation configurations sampled from measures with different parameter values.
In the proof of Theorem~\ref{thm:percmulti} we use the following coupling lemma (Lemma 3.1 from~\cite{LRV17}). The proof is omitted since it is quite simple and can be found in~\cite{LRV17};
the idea of the coupling is reminiscent of Doeblin's maximal coupling lemma (see \cite{T00} Chapter 1.4).
\begin{lemma}\label{lemma:coupling1}
 Let~$\mathbb{P}_{\theta}$ denote probability measures on a finite set~$S$, parametrized by $\theta\in(0, 1)^N$, such that~$\theta\mapsto \mathbb{P}_{\theta}(x)$ is continuous for every~$x\in S$.
 Assume that for some~$\theta_1$ and~$\bar{x} \in S$ we have~$\mathbb{P}_{\theta_1}(\bar{x})>0$.
 Then, for any~$\theta_2$ close enough to~$\theta_1$, there exists a coupling of two random elements~$X$ and~$Y$ of~$S$ such that 
~$X\sim\mathbb{P}_{\theta_1}$,~$Y\sim\mathbb{P}_{\theta_2}$ and 
\begin{equation}\label{eq:coupling1}
 \P\left(  \{X=Y\}\cup\{X=\bar{x}\}\cup\{Y=\bar{x}\}\right)=1.
\end{equation}
\end{lemma}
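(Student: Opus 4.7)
The plan is to construct the coupling explicitly in the Doeblin maximal-coupling spirit, using $\bar{x}$ as a ``sink'' that absorbs all discrepancy between $\mathbb{P}_{\theta_1}$ and $\mathbb{P}_{\theta_2}$. Writing $\mathbb{P}_1 := \mathbb{P}_{\theta_1}$ and $\mathbb{P}_2 := \mathbb{P}_{\theta_2}$, I would specify a joint law on $S\times S$ supported on the diagonal $\{(x,x)\}$ together with the strips $\{\bar{x}\}\times S$ and $S\times\{\bar{x}\}$. Concretely, for $x\neq \bar{x}$ put
\[
\alpha_x := \min\bigl(\mathbb{P}_1(x),\mathbb{P}_2(x)\bigr),\qquad \beta_x := \mathbb{P}_1(x)-\alpha_x,\qquad \gamma_x := \mathbb{P}_2(x)-\alpha_x,
\]
and set $P(X=Y=x)=\alpha_x$, $P(X=x, Y=\bar{x})=\beta_x$, $P(X=\bar{x}, Y=x)=\gamma_x$, together with
\[
\delta := P(X=\bar{x},Y=\bar{x}) := \mathbb{P}_1(\bar{x})+\mathbb{P}_2(\bar{x})-1+\sum_{x\neq\bar{x}}\alpha_x,
\]
all other joint probabilities being $0$. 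By construction the support of the joint law is contained in $\{X=Y\}\cup\{X=\bar{x}\}\cup\{Y=\bar{x}\}$, so~\eqref{eq:coupling1} will be automatic once the law is shown to be valid.

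The verification then splits into two routine checks and one substantive one. First, checking the marginal for $X$ at $x\neq\bar{x}$ gives $\alpha_x+\beta_x=\mathbb{P}_1(x)$ directly, and at $\bar{x}$ one computes $\delta+\sum_{x\neq\bar{x}}\gamma_x = \mathbb{P}_1(\bar{x})+\mathbb{P}_2(\bar{x})-1+\sum\alpha_x+\sum(\mathbb{P}_2(x)-\alpha_x)=\mathbb{P}_1(\bar{x})$; the argument for $Y$ is symmetric. Second, nonnegativity of $\alpha_x,\beta_x,\gamma_x$ is immediate from the definitions.

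The only genuine condition is $\delta\geq 0$, i.e.
\[
\sum_{x\neq\bar{x}}\min\bigl(\mathbb{P}_1(x),\mathbb{P}_2(x)\bigr)\;\geq\; 1-\mathbb{P}_1(\bar{x})-\mathbb{P}_2(\bar{x}).
\]
Evaluating at $\theta_2=\theta_1$ the left side equals $1-\mathbb{P}_1(\bar{x})$, and this strictly exceeds the right side by exactly $\mathbb{P}_1(\bar{x})>0$, where positivity comes from the hypothesis $\mathbb{P}_{\theta_1}(\bar{x})>0$. Since $S$ is finite and each map $\theta\mapsto\mathbb{P}_\theta(x)$ is continuous, both sides depend continuously on $\theta_2$, so the strict inequality persists for all $\theta_2$ in a small enough neighbourhood of $\theta_1$.

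The main (essentially the only) obstacle is identifying the correct role of $\bar{x}$: it must be used simultaneously as the slack variable for both marginals, which is why the ``lost'' mass $\beta_x$ of $\mathbb{P}_1(x)$ is paired with $Y=\bar{x}$ and the ``lost'' mass $\gamma_x$ of $\mathbb{P}_2(x)$ with $X=\bar{x}$. Once this symmetric bookkeeping is in place the rest is arithmetic, and the hypothesis $\mathbb{P}_{\theta_1}(\bar{x})>0$ is exactly what supplies the reserve of probability needed to absorb the total variation gap between $\mathbb{P}_1$ and $\mathbb{P}_2$.
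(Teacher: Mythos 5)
Your construction is correct: the marginals check out, the support is by design contained in $\{X=Y\}\cup\{X=\bar{x}\}\cup\{Y=\bar{x}\}$, and the continuity argument showing $\delta\geq 0$ for $\theta_2$ near $\theta_1$ is exactly where the hypothesis $\mathbb{P}_{\theta_1}(\bar{x})>0$ is needed. This is essentially the same Doeblin-style maximal coupling that the paper invokes (it omits the proof, citing Lemma 3.1 of \cite{LRV17}), just written as an explicit joint mass function rather than via a partition of $[0,1]$ sampled by a uniform variable.
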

The following is a modified version of Lemma~\ref{lemma:coupling1}, to be used in the proof of Theorem~\ref{thm:orpercmulti}.
\begin{lemma}\label{lemma:coupling2}
 Let~$\mathbb{P}_{\theta}$ denote probability measures on a finite set~$S$, parametrized by $\theta\in(0, 1)^N$, such that~$\theta\mapsto \mathbb{P}_{\theta}(x)$ is continuous for every~$x\in S$.
 Let~$\{\hat{S}, \doublehat{S}\}$ be a non-trivial partition of~$S$, and assume that for some~$\theta_1$, $\hat{x}\in \hat{S}$ and~$\doublehat{x} \in \doublehat{S}$ we have~$\mathbb{P}_{\theta_1}(\hat{x})>0$ and~$\mathbb{P}_{\theta_1}(\doublehat{x})>0$.
 Then, for any~$\theta_2$ close enough to~$\theta_1$, there exists a coupling of two random elements~$X$ and~$Y$ of~$S$ such that 
~$X\sim\mathbb{P}_{\theta_1}$,~$Y\sim\mathbb{P}_{\theta_2}$ and 
\begin{equation}\label{eq:coupling2}
 \P\left(\{X=Y\}\cup\{X=\hat{x}\}\cup\{X\in \hat{S}\cup\{\doublehat{x}\}, Y=\hat{x}\} \cup\{Y=\doublehat{x}\} \right)=1,
\end{equation}
specifically
\begin{equation}\label{eq:coupling2spec}
  \P(Y=\hat{x} \text{ or } \doublehat{x} | X=\doublehat{x})1.
\end{equation}
\end{lemma}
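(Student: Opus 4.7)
The plan is to construct explicitly a joint law $\pi$ on $S\times S$ with marginals $\mathbb{P}_{\theta_1}$ and $\mathbb{P}_{\theta_2}$ whose support lies in the set $\mathcal{A}$ of pairs $(x,y)$ belonging to the event on the right of~\eqref{eq:coupling2}. Unpacking the four events, $(x,y)\in\mathcal{A}$ iff $x=y$, or $x=\hat{x}$, or $y=\doublehat{x}$, or ($y=\hat{x}$ and $x\in\hat{S}\cup\{\doublehat{x}\}$). A quick case check shows that whenever $x=\doublehat{x}$, membership in $\mathcal{A}$ forces $y\in\{\hat{x},\doublehat{x}\}$ (the partition is non-trivial, so $\hat{x}\neq\doublehat{x}$), so~\eqref{eq:coupling2spec} is automatic once a coupling satisfying~\eqref{eq:coupling2} is in hand.

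Write $p_i(x):=\mathbb{P}_{\theta_i}(x)$. I would build $\pi$ in two stages. In Stage~1, for each $y\in S\setminus\{\hat{x},\doublehat{x}\}$ I set $\pi(y,y)=\min(p_1(y),p_2(y))$, route the $X$-excess $(p_1(y)-p_2(y))^+$ to the cell $(y,\doublehat{x})$, and route the $Y$-excess $(p_2(y)-p_1(y))^+$ to the cell $(\hat{x},y)$. All three cell types lie in $\mathcal{A}$ (using the clauses $x=y$, $y=\doublehat{x}$, and $x=\hat{x}$ respectively), and this exhausts the row mass for every $x\notin\{\hat{x},\doublehat{x}\}$ and the column mass for every $y\notin\{\hat{x},\doublehat{x}\}$. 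Let $E^+$ and $E^-$ denote the totals of the $X$- and $Y$-excesses used in Stage~1.

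In Stage~2 I would fill the remaining $2\times 2$ subgrid on $\{\hat{x},\doublehat{x}\}\times\{\hat{x},\doublehat{x}\}$, each of whose four cells sits in $\mathcal{A}$. The outstanding marginals are $p_1(\hat{x})-E^-$, $p_1(\doublehat{x})$, $p_2(\hat{x})$ and $p_2(\doublehat{x})-E^+$, and they add up consistently. Parametrizing the subgrid by $t:=\pi(\doublehat{x},\hat{x})$, the three other entries are forced by the marginal constraints, and nonnegativity of all four reduces to $t$ lying in the interval with lower endpoint $\max(0,\,p_2(\hat{x})+E^- - p_1(\hat{x}))$ and upper endpoint $\min(p_1(\doublehat{x}),\,p_2(\hat{x}))$.

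The step carrying the actual content is the feasibility check: I need the Stage~2 residual marginals to be nonnegative and the interval for $t$ to be nonempty. This is where the continuity hypothesis and the positivity assumptions $p_1(\hat{x})>0$, $p_1(\doublehat{x})>0$ enter. As $\theta_2\to\theta_1$ one has $E^{\pm}\to 0$ and $p_2(\hat{x})\to p_1(\hat{x})$, so the lower endpoint of the interval tends to $0$ while the upper endpoint tends to $\min(p_1(\hat{x}),p_1(\doublehat{x}))>0$, and likewise $p_1(\hat{x})-E^-$ and $p_2(\doublehat{x})-E^+$ remain positive. Hence for $\theta_2$ in a small enough neighborhood of $\theta_1$ a valid $t$ exists and yields the desired coupling. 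Since row $\doublehat{x}$ of $\pi$ is populated only during Stage~2 and only in columns $\hat{x}$ and $\doublehat{x}$, the conditional statement~\eqref{eq:coupling2spec} is built in by construction.
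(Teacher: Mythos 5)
Your construction is correct and is essentially the paper's own proof in different clothing: the paper realizes the same joint law by covering $[0,1]$ twice with intervals and reading $X$ and $Y$ off a single uniform, whereas you write down the joint table directly, but the mass bookkeeping (diagonal coupling on the common mass of states other than $\hat{x},\doublehat{x}$, excess mass routed into the columns/rows of $\hat{x}$ and $\doublehat{x}$) and the feasibility check via continuity plus $\mathbb{P}_{\theta_1}(\hat{x}),\mathbb{P}_{\theta_1}(\doublehat{x})>0$ are the same. No gaps.
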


\begin{proof}
We write~$\hat{S}=\{w_1, w_2, \dots, w_n, \hat{x}\}$ and~$\doublehat{S}=\{z_1, z_2, \dots, z_m, \doublehat{x}\}$ and for all~$y\in S$ and ~$k=1, 2$ let
\[
 \begin{array}{rclrcl}
 p(y)&=&\mathbb{P}_{\theta_1}(y)\wedge \mathbb{P}_{\theta_2}(y),&&& \\
 p_{\theta_1}(y)&=&[\mathbb{P}_{\theta_1}(y) - \mathbb{P}_{\theta_2}(y)]^+,& \hspace{20pt} p_{\theta_k}(\hat{S})&=&\sum_{y\in \hat{S}\setminus\{\hat{x}\}}p_{\theta_k}(y),\\
 p_{\theta_2}(y)&=&[\mathbb{P}_{\theta_2}(y) - \mathbb{P}_{\theta_1}(y)]^+, & \hspace{20pt} p_{\theta_k}(\doublehat{S})&=&\sum_{y\in \smalldoublehat{S}\setminus\{\hhatSS{x}\}}p_{\theta_k}(y).\\
\end{array}
\]

Let~$U$ be a uniform random variable on~$[0, 1]$. The values of~$X$ and~$Y$ will be given as functions of~$U$.
Clearly
\begin{align*}
 \sum_{i=1}^n p(w_i) + \sum_{j=1}^m p(z_j) + \mathbb{P}_{\theta_k}(\hat{x}) + p_{\theta_k}(\hat{S}) + \mathbb{P}_{\theta_k}(\doublehat{x}) + p_{\theta_k}(\doublehat{S})=1,
\end{align*}
so we can cover the line segment~$[0,1]$ with disjoint intervals with lengths equal to the summands of the left-hand side of the above equality with either~$k=1$ or~2 (see Figure~\ref{fig:01}).
For any value of~$u$ we choose~$X$ and~$Y$ to be the element of~$S$ that corresponds to the interval~$u$ falls into in the first and second cover respectively.

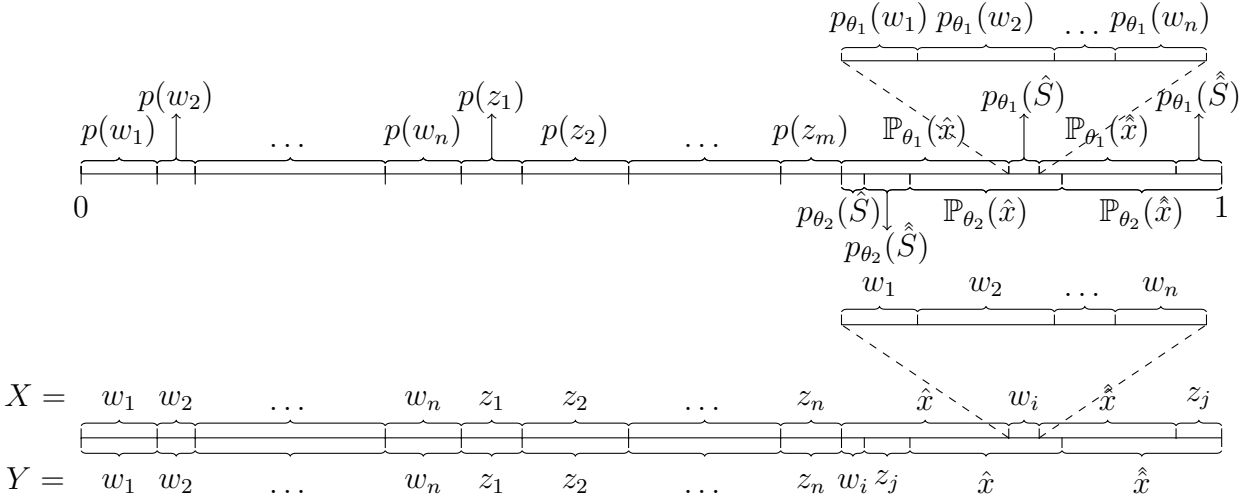
\begin{figure}[ht]
\centering
  \begin{tikzpicture}[every edge/.style={shorten <=1pt, shorten >=1pt}]
  \draw (0,0)   -- (15,0) ;
\draw (0,-0.15)  node [below] {0};
\draw (15,-0.15)  node [below] {1};

  \coordinate (p) at (0,4pt);
  \foreach \myprop/\mytext [count=\n] in {1/$p(w_1)$,0.5/,2.5/\dots,1/$p(w_n)$,0.8/,1.4/$p(z_2)$,2/\dots,0.8/$p(z_m)$,2.2/$\mathbb{P}_{\theta_1}(\hat{x})$,0.4/,1.8/$\mathbb{P}_{\theta_1}(\doublehat{x})$,0.6/}
  \draw [decorate,decoration={brace,amplitude=2}] (p)  edge [draw] +(0,-5pt) -- ++(\myprop,0) coordinate (p) node [midway, above=2 pt, anchor=south] {\mytext} ;
\coordinate (p) at (0,4pt);
  \foreach \myprop/\mytext [count=\n] in {1/,0.5/$p(w_2)$,2.5/,1/,0.8/$p(z_1)$,1.4/,2/,0.8/,2.2/,0.4/$p_{\theta_1}(\hat{S})$,1.8/,0.6/$p_{\theta_1}(\doublehat{S})$}
  \draw [decorate,decoration={brace,amplitude=2}] (p)  edge [draw] +(0,-5pt) -- ++(\myprop,0) coordinate (p) node [midway, above=15 pt, anchor=south] {\mytext} ;
  \path (15,4pt) edge [draw]  ++(0,-9pt);
 \coordinate (p) at (0,4pt);
\foreach \n in {1,0.5,2.5,1,0.8,1.4,2,0.8}
	\path (p)  edge [draw] +(0,-9pt) -- ++(\n,0)  coordinate (p);

 \draw [->](1.25,0.2) -- (1.25,0.8) ;
\draw [->](5.4,0.2) -- (5.4,0.8) ;
\draw [->](12.4,0.2) -- (12.4,0.8) ;
\draw [->](14.7,0.2) -- (14.7,0.8) ;
  
   \coordinate (p) at (10,-5pt);
  \foreach \myprop/\mytext [count=\n] in {0.3/$p_{\theta_2}(\hat{S}) \hspace{10pt}  $,0.6/,2/$\mathbb{P}_{\theta_2}(\hat{x})$,2.1/$\mathbb{P}_{\theta_2}(\doublehat{x})$}
  \draw [decorate,decoration={brace,amplitude=2, mirror}] (p)  edge [draw] +(0,6pt) -- ++(\myprop,0) coordinate (p) node [midway, below=20pt, anchor=south] {\mytext} ;
   \coordinate (p) at (10,-5pt);
  \foreach \myprop/\mytext [count=\n] in {0.3/,0.6/$p_{\theta_2}(\doublehat{S})$,2/,2.1/}
  \draw [decorate,decoration={brace,amplitude=2, mirror}] (p)  edge [draw] +(0,6pt) -- ++(\myprop,0) coordinate (p) node [midway, below=33pt, anchor=south] {\mytext} ;

\draw [->](10.6,-0.22) -- (10.6,-0.75) ;

 \draw [dashed](12.2,0) -- (10,1.5) ;
\draw [dashed](12.6,0) -- (14.8,1.5) ;
\draw (10,1.5) -- (14.8,1.5);
 \coordinate (p) at (10,1.64);
  \foreach \myprop/\mytext [count=\n] in {1/$p_{\theta_1}(w_1)$,1.8/$p_{\theta_1}(w_2)$,0.8/\dots,1.2/$p_{\theta_1}(w_n)$}
  \draw [decorate,decoration={brace,amplitude=2}] (p)  edge [draw] +(0,-5pt) -- ++(\myprop,0) coordinate (p) node [midway, above=2 pt, anchor=south] {\mytext} ;
 \path (14.8,1.64) edge [draw]  ++(0,-5pt);

\draw (0,-3.5)   -- (15,-3.5) ;
\coordinate (p) at (0,-3.34);
  \foreach \myprop/\mytext [count=\n] in {1/$w_1$,0.5/$w_2$,2.5/\dots,1/$w_n$,0.8/$z_1$,1.4/$z_2$,2/\dots,0.8/$z_n$,2.2/$\hat{x}$,0.4/$w_i$,1.8/$\doublehat{x}$,0.6/$z_j$}
  \draw [decorate,decoration={brace,amplitude=2}] (p)  edge [draw] +(0,-6pt) -- ++(\myprop,0) coordinate (p) node [midway, above=2pt, anchor=south] {\mytext} ;
  \path (15,0.5pt) edge [draw]  ++(0,-5pt);
  
   \coordinate (p) at (0,-3.67);
  \foreach \myprop/\mytext [count=\n] in {1/$w_1$,0.5/$w_2$,2.5/\dots,1/$w_n$,0.8/$z_1$,1.4/$z_2$,2/\dots,0.8/$z_n$,0.3/$w_i$,0.6/$z_j$,2/$\hat{x}$,2.1/$\doublehat{x}$}
  \draw [decorate,decoration={brace,amplitude=2, mirror}] (p)  edge [draw] +(0,6pt) -- ++(\myprop,0) coordinate (p) node [midway, below=20pt, anchor=south] {\mytext} ;

  \path (15,-3.34) edge [draw]  ++(0,-9pt);

\draw (-0.6,-2.65)  node [below] {$X=$};
\draw (-0.6,-3.75)  node [below] {$Y=$};

 \draw [dashed](12.2,-3.5) -- (10,-2) ;
\draw [dashed](12.6,-3.5) -- (14.8,-2) ;
\draw (10,-2) -- (14.8,-2);
 \coordinate (p) at (10,-1.83);
  \foreach \myprop/\mytext [count=\n] in {1/$w_1$,1.8/$w_2$,0.8/\dots,1.2/$w_n$}
  \draw [decorate,decoration={brace,amplitude=2}] (p)  edge [draw] +(0,-6pt) -- ++(\myprop,0) coordinate (p) node [midway, above=2 pt, anchor=south] {\mytext} ;
 \path (14.8,-1.83) edge [draw]  ++(0,-6pt);
  
\end{tikzpicture}
\caption{The partitioning of the line segment $[0,1]$, and the sampling of $(X, Y)$.} \label{fig:01}
\end{figure}

\pagebreak
To guarantee that \eqref{eq:coupling2} is satisfied we arrange these intervals in a way that 
\begin{itemize}
 \item the interval corresponding to~$\P_{\theta_1}(\doublehat{x})$ in the first cover is entirely contained in the intervals corresponding to~$\P_{\theta_2}(\hat{x})$ and~$\P_{\theta_2}(\doublehat{x})$ in the second cover;
 \item the interval corresponding to~$p_{\theta_1}(\doublehat{S})$ in the first cover is contained in the interval corresponding to~$\P_{\theta_2}(\doublehat{x})$ in the second cover;
 \item the interval corresponding to~$p_{\theta_1}(\hat{S})$ in the first cover is contained in the intervals corresponding to~$\P_{\theta_2}(\hat{x})$ and~$\P_{\theta_2}(\doublehat{x})$ in the second cover.
\end{itemize}

The above is possible since by continuity, as~$\theta_2\rightarrow\theta_1: \mathbb{P}_{\theta_2}(\hat{x})\rightarrow\mathbb{P}_{\theta_1}(\hat{x})>0$, $\mathbb{P}_{\theta_2}(\doublehat{x})\rightarrow\mathbb{P}_{\theta_1}(\doublehat{x})>0$ as well as~$p_{\theta_1}(\hat{S}), p_{\theta_1}(\doublehat{S})\rightarrow0$.
Therefore, if~$\theta_2$ is sufficiently close to~$\theta_1$, we have 
\begin{align*}
 p_{\theta_1}(\doublehat{S})&<\mathbb{P}_{\theta_2}(\doublehat{x}),\\
 p_{\theta_1}(\doublehat{S})+\mathbb{P}_{\theta_1}(\doublehat{x})+p_{\theta_1}(\hat{S})&<\mathbb{P}_{\theta_2}(\doublehat{x})+\mathbb{P}_{\theta_2}(\hat{x}).\\
\end{align*}
\end{proof}

% Note that in this coupling, conditioned on $X=\doublehat{x}$ we have $Y=\doublehat{x}$ or $Y=\hat{x}$ almost surely.

\section{Proof of Theorem \ref{thm:percmulti}}
\label{s:proof1}
We start showing that if the statement of Theorem~\ref{thm:percmulti} is proved for a given set of edges and vertices as in~\eqref{eq:set_of_edges}, then the same continuity statement automatically follows for smaller sets of edges and vertices. To prove this, let $e_1,\ldots, e_K,\;w_1,\ldots, w_L$ be edges and vertices as in~\eqref{eq:set_of_edges}, and let~$w_{L+1}$ be an additional vertex (we could alternatively take an additional edge with no change to the argument that follows). We now compare two percolation models on~$\mathbb{G}$: the first one with parameter values~$\bold{q} = (q_1,\ldots, q_{K+L})$ for~$\mathbb{E}^1,\ldots, \mathbb{E}^{K+L}$ and ~$p$ for all other edges, and the second one with parameter values~$(\bold{q},q_{K+L+1})$ for~$\mathbb{E}^1,\ldots, \mathbb{E}^{K+L+1}$ and~$p$ for all other edges.

\begin{claim}\label{claim:contmm-1}
 If the function~$(\bold{q}, q_{K+L+1})\mapsto {p_c}(\bold{q}, q_{K+L+1})$ is continuous in~$(0,1)^{K+L+1}$, then~$\bold{q}\mapsto {p_c}(\bold{q})$ is continuous in~$(0,1)^{K+L}$.
 \end{claim}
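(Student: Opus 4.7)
The plan rests on the elementary observation that the $(K+L)$-parameter model on $\mathbb{G}$ with parameters $(\bold{q},p)$ has exactly the same law as the $(K+L+1)$-parameter model with parameters $(\bold{q},q_{K+L+1},p)$ in the special case $q_{K+L+1}=p$: in both configurations every edge of $\mathbb{E}^{K+L+1}$, together with every edge of $\mathbb{E}\setminus\bigcup_{i=1}^{K+L+1}\mathbb{E}^i$, is open independently with probability $p$. Writing $f_{\bold{q}}(p):=p_c(\bold{q},p)$ for the $(K+L+1)$-parameter critical curve (which is continuous on $(0,1)^{K+L+1}$ by hypothesis), this identification yields the fixed-point characterization
\[
p_c(\bold{q})=\sup\{p\in(0,1):p<f_{\bold{q}}(p)\}.
\]

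Next I would verify, via a standard monotone coupling, that $q_{K+L+1}\mapsto p_c(\bold{q},q_{K+L+1})$ is non-increasing on $(0,1)$, so that $p\mapsto p-f_{\bold{q}}(p)$ is continuous and non-decreasing. Consequently this function has at most one zero in $(0,1)$, which must coincide with $p_c(\bold{q})$ when it exists. In particular, for any $\bold{q}_0$, one has $f_{\bold{q}_0}(r)<r$ strictly for every $r\in(p_c(\bold{q}_0),1)$ and $f_{\bold{q}_0}(r)>r$ strictly for every $r\in(0,p_c(\bold{q}_0))$.

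Continuity at $\bold{q}_0\in(0,1)^{K+L}$ then follows by a squeeze. Given $\bold{q}_n\to\bold{q}_0$ and any $r\in(p_c(\bold{q}_0),1)$, the joint-continuity hypothesis provides $f_{\bold{q}_n}(r)\to f_{\bold{q}_0}(r)<r$, so $f_{\bold{q}_n}(r)<r$ for all large $n$; translated back to the $(K+L)$-model via the identification, this says that percolation occurs at $(\bold{q}_n,r)$, whence $p_c(\bold{q}_n)\le r$. Letting $r\downarrow p_c(\bold{q}_0)$ gives $\limsup_n p_c(\bold{q}_n)\le p_c(\bold{q}_0)$. The matching $\liminf$ bound is obtained symmetrically by taking $r\in(0,p_c(\bold{q}_0))$ and using $f_{\bold{q}_n}(r)>r$ eventually.

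The only minor subtlety is the boundary case $p_c(\bold{q}_0)\in\{0,1\}$, where the fixed point of $f_{\bold{q}_0}$ need not lie in the open interval; there only one half of the squeeze applies, but the same reasoning still forces $p_c(\bold{q}_n)$ to the correct endpoint. I expect this boundary check, together with confirming the monotonicity of $p_c(\bold{q},\cdot)$, to be the only technical points, both of them routine.
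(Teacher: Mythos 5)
Your proposal is correct and follows essentially the same route as the paper: both rest on the observation that setting $q_{K+L+1}=p$ recovers the $(K+L)$-parameter model, both characterize $p_c(\bold{q})$ as the fixed point of $t\mapsto p_c(\bold{q},t)$ (unique because $t\mapsto t-p_c(\bold{q},t)$ is strictly increasing), and both combine monotonicity in the last coordinate with the assumed joint continuity to control how that fixed point moves. Your sequential squeeze is just a rephrasing of the paper's $\epsilon$--$\delta$ step, which locates the perturbed fixed point in $(t-\epsilon,t+\epsilon)$ via the intermediate value theorem.
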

 \begin{proof}
Since~$(0, 1)\ni q_{K+L+1}\mapsto {p_c}(\bold{q}, q_{K+L+1})$ is non-increasing and by assumption continuous, there exists a unique~$t^*\in(0, 1)$ such that~$t^*= {p_c}(\bold{q}, t^*)$.
We claim that~$t^*={p_c}(\bold{q})$.
Indeed, by the definition of~${p_c}(\bold{q}, t^*)$,
\begin{align*}
 &\forall t>t^*, \hspace{10pt} 0<{\mathbb{P}}_{(\bold{q}, t^*), t}(C_{\infty})\leq{\mathbb{P}}_{(\bold{q}, t), t}(C_{\infty})={\mathbb{P}}_{\bold{q}, t}(C_{\infty}), \text{ and}\\[.2cm]
 &\forall t<t^*, \hspace{10pt} 0={\mathbb{P}}_{(\bold{q}, t^*), t}(C_{\infty})\geq{\mathbb{P}}_{(\bold{q}, t), t}(C_{\infty})={\mathbb{P}}_{\bold{q}, t}(C_{\infty}),
\end{align*}
which implies~${p_c}(\bold{q})= t^*$.
 
 Assume that~${p_c}(\bold{q}, t)= t$ for some~$\bold{q}$ and~$t$.
 By continuity, for all~$\epsilon > 0$, if~$\boldsymbol{\delta} \in (0,1)^{K+L}$ is close enough to zero we have~$${p_c}(\bold{q}+\boldsymbol{\delta}, t)\in(t-\epsilon, t+\epsilon).$$
 As~${p_c}$ is non-increasing in~$t$, this yields
 \[
{p_c}(\bold{q}+\boldsymbol{\delta}, t-\epsilon)> t-\epsilon \hspace{10pt} \text{ and } \hspace{10pt} {p_c}(\bold{q}+\boldsymbol{\delta}, t+\epsilon)< t+\epsilon.
 \]
 Hence there exists~$t'\in (t-\epsilon, t+\epsilon)$ such that~${p_c}(\bold{q}+\boldsymbol{\delta}, t')=t'$.
 This implies that~$\bold{q}\mapsto {p_c}(\bold{q})$ is continuous.
\end{proof}

For our base graph~$G=(V,E)$,~$u, v \in V$ and~$V' \subset V$, let~$\text{dist}_G(u,v)$ be the graph distance between~$u$ and~$v$, and let~$\text{dist}_G(u,V')$ be the smallest graph distance between~$u$ and a point of~$V'$. Fix~$r \in \mathbb{N}$,~$u_0 \in V$ and let
\begin{equation}\label{eq:U}
U:=B_r(u_0), 
\end{equation}
that is the ball of radius~$r$ around~$u_0$ with respect to the graph distance.

\emph{From now on, we will assume that the edges~$e_1,\ldots, e_K$ of \eqref{eq:set_of_edges} are all the edges with both endpoints belonging to~$U$, and that the vertices~$w_1,\ldots, w_L$ of \eqref{eq:set_of_edges} are all the vertices of~$U$.} We are allowed to restrict ourselves to this case by Claim~\ref{claim:contmm-1}.

The proof of Theorem~\ref{thm:percmulti} will be a consequence of the following claim. 
\begin{claim}\label{claim:pqsurv}
 For all~$p\in(0,1)$,~$\bold{q_0}\in(0,1)^{K+L}$ and~$\epsilon\in(0, 1-p)$ there exists a~$\delta>0$ such that for any~$\bold{q}, \bold{q'}\in(0,1)^{K+L}$ satisfying~$\|\bold{q_0}-\bold{q}\|_{\infty}<\delta$ and~$\|\bold{q_0}-\bold{q'}\|_{\infty}<\delta$ we have
 \[
  \mathbb{P}_{\bold{q},\;p}(C_{\infty})\leq\mathbb{P}_{\bold{q'}, p+\epsilon}(C_{\infty}).
 \]
\end{claim}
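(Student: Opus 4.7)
\emph{Proof plan.} The plan is to construct an explicit coupling of $\omega\sim\mathbb{P}_{\bold{q},p}$ and $\omega'\sim\mathbb{P}_{\bold{q'},p+\epsilon}$ by combining a block-by-block application of Lemma~\ref{lemma:coupling1} on the column edges with a monotone coupling on the non-column edges, using the extra $\epsilon$ on non-column edges as a sprinkling to repair the occasional places where the column coupling fails. Fix a block height $N\in\mathbb{N}$ to be chosen later and partition $\bigcup_{i=1}^{K+L}\mathbb{E}^i$ into finite blocks $B_k$, $k\in\mathbb{Z}$, each comprising the column edges attached to layers $\{kN,\ldots,(k+1)N-1\}$. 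By vertical translation invariance all $B_k$ have identical finite structure, and $\bar x_k$---the configuration in which every edge of $B_k$ is closed---satisfies $\mathbb{P}_{\bold{q_0}}|_{B_k}(\bar x_k)>0$. Applying Lemma~\ref{lemma:coupling1} independently on each $B_k$ with $\theta_1=\bold q$, $\theta_2=\bold{q'}$ and $\bar x=\bar x_k$ produces, for $\bold q,\bold{q'}$ within a small enough $\delta$-ball of $\bold{q_0}$, independent coupled pairs $(X_k,Y_k)$ with $X_k\sim\mathbb{P}_{\bold q}|_{B_k}$, $Y_k\sim \mathbb{P}_{\bold{q'}}|_{B_k}$ and $\mathbb{P}(\{X_k=Y_k\}\cup\{X_k=\bar x_k\}\cup\{Y_k=\bar x_k\})=1$. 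On each non-column edge $e$ I couple monotonically: $\omega(e)=\I_{\{U_e\le p\}}$, $\omega'(e)=\I_{\{U_e\le p+\epsilon\}}$ using an independent uniform $U_e$, so that every non-column edge open in $\omega$ is open in $\omega'$.

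Call a block $B_k$ \emph{bad} if $X_k\neq Y_k$; the bound $\mathbb{P}(X_k\neq Y_k)\le d_{TV}(\mathbb{P}_{\bold q}|_{B_k},\mathbb{P}_{\bold{q'}}|_{B_k})$ makes this probability arbitrarily small, and uniform in $k$, as $\delta\to 0$. The sub-case $X_k=\bar x_k\neq Y_k$ is harmless: $\omega$ has no open column edge in $B_k$, so any $\omega$-cluster that crosses $B_k$ uses only non-column edges, which are also in $\omega'$. The problematic sub-case is $Y_k=\bar x_k\neq X_k$: then $\omega'$ lacks column edges of $B_k$ that $\omega$ has, and the $\omega'$-cluster may be disconnected across $B_k$. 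To handle this, I use the $\epsilon$-sprinkling to produce a bypass. We may assume $V\setminus U\neq\emptyset$ (otherwise $\mathbb{G}$ is a quasi-one-dimensional ladder, $C_\infty$ has probability zero for any parameter values in $(0,1)$, and the claim is trivial); fix $u^*\in U$ with a neighbor $w^*\in V\setminus U$. For each $k$, the non-column path
\[
(u^*,kN-1)\,-\,(w^*,kN-1)\,-\,(w^*,kN)\,-\,\cdots\,-\,(w^*,(k+1)N)\,-\,(u^*,(k+1)N)
\]
goes around $B_k$ using a fixed number of non-column edges depending only on $N$, and is therefore open in $\omega'$ with probability $\alpha=\alpha(p+\epsilon,N)>0$, independently of the column coupling and of the bypasses at blocks sufficiently far apart.

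The remaining and main step is to show that, for $\delta$ small enough compared with $\alpha(p+\epsilon,N)$, the coupling satisfies $\mathbb{P}(C_\infty\text{ under }\omega'\mid C_\infty\text{ under }\omega)>0$. Combined with the vertical-translation ergodicity of $\mathbb{P}_{\bold{q'},p+\epsilon}$---which makes $C_\infty$ a tail event and hence of probability $0$ or $1$---this positive-probability statement is equivalent to the desired inequality $\mathbb{P}_{\bold q,p}(C_\infty)\le \mathbb{P}_{\bold{q'},p+\epsilon}(C_\infty)$. The main obstacle is precisely this renormalization step at the block scale: one must verify that the infinite cluster of $\omega$, after losing some of its column connectivity in type-B bad blocks, can be re-routed in $\omega'$ through realized bypasses often enough to remain infinite. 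This uses the independence between the column coupling and the bypass events, together with the fact that the bad-block density tends to zero with $\delta$ while the bypass success probability remains bounded below, and is the place where the partition scale $N$ and the parameter window $\delta$ must be chosen with care.
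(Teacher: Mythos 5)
Your overall architecture (tile the column into finite blocks, apply Lemma~\ref{lemma:coupling1} independently on each block, couple the remaining $p$-edges monotonically with an $\epsilon$-sprinkling) matches the paper's, but there is a genuine gap at exactly the point you flag as ``the remaining and main step'': the re-routing/renormalization argument for the bad blocks with $Y_k=\bar x_k\neq X_k$ is not carried out, and as set up it is doubtful it can be. Your bypass only reconnects the single pair $(u^*,kN-1)$ and $(u^*,(k+1)N)$, whereas the column edges of $X_k$ that $\omega'$ loses may be providing many different boundary-to-boundary connections inside the block; worse, for some base graphs $G$ the column neighbourhood is unavoidable (e.g.\ if $u_0$ is a cut vertex of $G$, every horizontal and vertical edge at $(u_0,m)$ is a column edge, so in a type-B block the vertex $(u_0,m)$ is stripped bare in $\omega'$ and no detour through $V\setminus U$ exists). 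So the damage caused by a type-B block is not locally repairable, and controlling it would require a global argument about how the infinite $\omega$-cluster uses the column --- precisely what your sketch defers.

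The paper sidesteps this entirely by choosing the coupled block and the distinguished configuration $\bar x$ differently, so that the domination $C_0(A,\omega_0)\subseteq C_0(A,\omega_0')$ holds \emph{almost surely for every} boundary set $A$, and no renormalization is needed. Two ingredients make this work. First, the set $S$ fed to Lemma~\ref{lemma:coupling1} is not just the column edges of the block: it is $\Omega_0^1\times\cdots\times\Omega_0^{K+L}\times\Omega_0^\partial\times\Omega_0^\partial$, i.e.\ it also contains \emph{two independent copies} of the $p$-edges $\mathbb{E}^\partial_0$ adjacent to the column inside the box, with $\omega_0'$ using the union of the two copies (probability $p+\epsilon$). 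Second, $\bar x=(\bar x^U,\bar x^{\partial,1},\bar x^{\partial,2})$ is not the all-closed configuration: $\bar x^U$ is a staircase pattern chosen so that together with $\bar x^{\partial,1}\equiv 0$ it connects no two boundary vertices of the box ($C_0(A,\omega_0)=A$ when $X=\bar x$), while together with $\bar x^{\partial,1}\vee\bar x^{\partial,2}\equiv 1$ it connects \emph{all} of $\partial\mathbb{V}_0$ ($C_0(A,\omega_0')=\partial\mathbb{V}_0$ when $Y=\bar x$). Thus every outcome of the coupling --- $X=Y$, $X=\bar x$, or $Y=\bar x$ --- yields the deterministic inclusion of connected boundary sets, and $C_\infty(\omega)\Rightarrow C_\infty(\omega')$ pointwise. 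By keeping all $p$-edges outside the block coupling and taking $\bar x$ all-closed, you have made the $Y=\bar x_k$ case impossible to dominate deterministically, which is what forces you into the unfinished probabilistic repair. To fix your proof you would need to import both of the paper's devices (or complete a genuinely new renormalization argument, which your sketch does not do).
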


Note that Claim~\ref{claim:pqsurv} is trivial if~$\bold{q'}-\bold{q}$ has non-negative coordinates.

 \begin{proof}[Proof of Theorem~\ref{thm:percmulti}]
 Fix~$\bold{q_0} \in (0,1)^{K+L}$ and~$\epsilon > 0$. By Claim~\ref{claim:pqsurv}, if~$\|\bold{q_0}-\bold{q'}\|_{\infty}$ is close enough to zero, then
\begin{align}\label{eq:aux_appl_claim1}
&\mathbb{P}_{\bold{q},\;p_c(\bold{q_0}) + \epsilon}(C_{\infty}) \geq \mathbb{P}_{\bold{q_0} ,\;p_c(\bold{q_0}) + \frac{\epsilon}{2}}(C_{\infty}),\\[.2cm]
\label{eq:aux_appl_claim2}&\mathbb{P}_{\bold{q},\;p_c(\bold{q_0}) - \epsilon}(C_{\infty}) \leq \mathbb{P}_{\bold{q_0} ,\;p_c(\bold{q_0}) - \frac{\epsilon}{2}}(C_{\infty}).
\end{align}
By the definition of~$p_c(\bold{q_0})$, the right-hand side of \eqref{eq:aux_appl_claim1} is positive and the right-hand side of \eqref{eq:aux_appl_claim2} is zero; hence, the two inequalities respectively yield 
$$p_c(\bold{q}) \leq p_c(\bold{q_0}) + \epsilon\quad \text{and}\quad p_c(\bold{q}) \geq p_c(\bold{q_0}) - \epsilon.$$
This implies that~$\bold{q} \mapsto p_c(\bold{q})$ is continuous at~$\bold{q_0}$.
\end{proof}

\begin{proof} [Proof of Claim~\ref{claim:pqsurv}]
We start with several definitions. Recall the definition of~$U$ in~\eqref{eq:U} and for~$n \in \mathbb{Z}$ let
$$\mathbb{V}_n = \{(v,m) \in \mathbb{V}:\; v\in~B_{r+1}(u_0),\; (2L+2)n \leq m \leq (2L+2)(n+1)\}$$
and
\begin{align*}
\mathbb{E}&_n = \{e \in \mathbb{E}:\; e \text{ has both endpoints in } \mathbb{V}_n\}\\[-.1cm]
&\backslash \{e \in \mathbb{E}: e = \{(u, (2L+2)(n+1)),(v, (2L+2)(n+1))\} \text{ for some } \{u,v\} \in E\}.
\end{align*}
We think of~$\mathbb{V}_n$ as a ``box'' of vertices and of~$\mathbb{E}_n$ as all the edges in the subgraph induced by this box, except for the ``ceiling''. Note that the~$\mathbb{E}_n$ are disjoint (though the~$\mathbb{V}_n$ are not). Next, recall the definition of~$\mathbb{E}^i$ for~$1 \leq i \leq K+L$ from \eqref{eq:einonoriented1} and \eqref{eq:einonoriented2}. Observe that~$\cup_i\mathbb{E}^i\subsetneq\cup_n\mathbb{E}_n$ and define, for~$n \in \mathbb{Z}$ and~$1 \leq i \leq K+L$,
$$\mathbb{E}^i_n = \mathbb{E}_n \cap \mathbb{E}^i,\qquad \mathbb{E}^\partial_n = \mathbb{E}_n \backslash \left(\cup_{i=1}^{K+L} \mathbb{E}^i_n\right),\qquad \mathbb{E}_\mathcal{O} = \mathbb{E} \backslash \left( \cup_{n \in \mathbb{Z}} \mathbb{E}_n\right).$$
The ``edge boundary''~$\mathbb{E}^\partial_n$ consists of edges of the form~$\{(u,m),(u,m+1)\}$, with~$u$ such that~$\text{dist}(u, u_0)=r+1$, and edges of the form~$\{(u,m),(v,m)\}$, with~$v \in U$ and~$\text{dist}(u, u_0)=r+1$. Next, let
$$\Omega_n^i = \{0,1\}^{\mathbb{E}_n^i},\quad \Omega_n^\partial = \{0,1\}^{\mathbb{E}_n^\partial}, \quad \Omega_n = \{0,1\}^{\mathbb{E}_n},\quad \Omega_\mathcal{O} = \{0,1\}^{\mathbb{E}_\mathcal{O}};$$ note that
$$\Omega = \Omega_\mathcal{O} \times \prod_{n\in \mathbb{Z}} \Omega_n = \Omega_\mathcal{O} \times \prod_{n\in \mathbb{Z}}\left( \Omega_n^\partial \times \prod_{i=1}^{K+L} \Omega_n^i\right).$$

For each~$n$, define the inner vertex boundary, consisting of the ``floor'', ``walls'' and ``ceiling'' of the vertex box~$\mathbb{V}_n$,
\begin{align*}\partial \mathbb{V}_n = \{(v,n)& \in \mathbb{V}_n: \text{dist}(v, u_0)=r+1\} \\ &\cup (U\times \{(2L+2)n\}) \cup (U \times \{(2L+2)(n+1)\}).\end{align*}
Given any~$\varnothing\neq A \subseteq \partial \mathbb{V}_n$ and~$\omega_n \in \Omega_n$, define
$$C_n(A,\omega_n) = \{(v,n) \in \partial \mathbb{V}_n: (v_0,n_0) \stackrel{\omega_n}{\longleftrightarrow} (v,n) \text{ for some }(v_0,n_0) \in A\},$$
where the notation~$(v_0,n_0)  \stackrel{\omega_n}{\longleftrightarrow} (v,n)$ means that~$(v_0,n_0)$ and~$(v,n)$ are connected by an~$\omega_n$-open path of edges of~$\mathbb{E}_n$. Note that~$A \subseteq C_n(A,\omega_n)$.

Now fix~$p$,~$\bold{q_0}$ and~$\epsilon$, and for~$\delta$ close enough to zero let~$\bold{q}=(q_1, \dots, q_{K+L})$ and~$\bold{q'}=(q'_1, \dots, q'_{K+L})$ be as in the statement of the claim. Note that~$\|\bold{q}-\bold{q'}\|_{\infty}<2\delta$.
We will define coupling measures~$\mu_\mathcal{O}$ on~$(\Omega_\mathcal{O})^2$ and~$\mu_n$ on~$(\Omega_n)^2$ satisfying the following properties. First, 
\begin{equation}\label{eq:prop_coup1}\begin{split}&(\omega_\mathcal{O},\omega'_\mathcal{O}) \sim \mu_\mathcal{O} \quad \Longrightarrow \quad \omega_\mathcal{O} \stackrel{\text{(d)}}{=} \mathbb{P}_{\bold{q}, p}\vert_{\mathbb{E}_\mathcal{O}},\quad \omega'_\mathcal{O} \stackrel{\text{(d)}}{=} \mathbb{P}_{\bold{q'}, p+\epsilon}\vert_{\mathbb{E}_\mathcal{O}} \\[.2cm]&\hspace{7cm}\text{ and } \omega_\mathcal{O} \leq \omega'_\mathcal{O} \text{ a.s.}\end{split}\end{equation}
(we denote by~$\mathbb{P}_{\bold{q}, p} \vert_{\mathbb{E}'}$ the projection of~$\mathbb{P}_{\bold{q}, p}$ to~$\mathbb{E}' \subset \mathbb{E}$).
Second, 
\begin{equation}\label{eq:prop_coup2}\begin{split}&(\omega_n,\omega'_n) \sim \mu_n \quad \Longrightarrow \quad \omega_n \stackrel{\text{(d)}}{=} \mathbb{P}_{\bold{q}, p}\vert_{\mathbb{E}_n},\quad \omega'_n \stackrel{\text{(d)}}{=} \mathbb{P}_{\bold{q'}, p+\epsilon}\vert_{\mathbb{E}_n} \\[.2cm]&\hspace{4cm}\text{ and } C_n(A,\omega_n) \subseteq C_n(A,\omega'_n) \text{ for all } A \subset \partial \mathbb{V}_n \text{ a.s.}\end{split}\end{equation}
We then define the coupling measure~$\mu$ on~$\Omega^2$ by
$$\mu = \mu_\mathcal{O} \otimes \left(\otimes_{n \in \mathbb{Z}} \mu_n \right).$$
It is clear from \eqref{eq:prop_coup1} and \eqref{eq:prop_coup2} that, if~$(\omega, \omega') \sim \mu$, then~$\omega \sim \mathbb{P}_{\bold{q}, p}$,~$\omega' \sim \mathbb{P}_{\bold{q'}, p+\epsilon}$, and almost surely if~$C_{\infty}$ holds for~$\omega$, then it holds for~$\omega'$. Consequently
\[
 \mathbb{P}_{\bold{q}, p}(C_{\infty})\leq\mathbb{P}_{\bold{q'}, p+\epsilon}(C_{\infty}).
\]

The definition of~$\mu_\mathcal{O}$ is standard. We take in some probability space a pair of random elements~$Z=(Z_1, Z_2) \in \Omega_\mathcal{O}^2$ such that~$Z_1$ and~$Z_2$ are independent on all edges of~$\mathbb{E}_\mathcal{O}$ and they assign each edge to be open with probability~$p$ and~$\frac{\epsilon}{1-p}$ respectively. We then let~$\omega_\mathcal{O}=Z_1$ and~$\omega'_O=Z_1\vee Z_2$, and~$\mu_\mathcal{O}$ be the distribution of~$(\omega_\mathcal{O},\omega'_\mathcal{O})$, so that \eqref{eq:prop_coup1} is clearly satisfied.

The measures~$\mu_n$ will be defined as translations of each other, so we only define~$\mu_0$. The construction relies on Lemma~\ref{lemma:coupling1}, with the finite set~$S$ of that lemma being here the set 
\[\Omega_0^1 \times \cdots \times \Omega^{K+L}_0 \times \Omega^\partial_0 \times \Omega^\partial_0.\]
The two factors of~$\Omega^\partial_0$ ensure the extra randomness needed for the coupling.
We now define the deterministic element~$\bar{x}$ of the above set that appears in the statement of Lemma~\ref{lemma:coupling1}. The definition is simple, but the notation is clumsy; a quick glimpse at Figure~\ref{fig:xbar} should clarify what is involved. We start assuming, without loss of generality, that the elements~$w_1, \dots, w_L$ of~$U$ are enumerated so that
\[
 \text{dist}_G(w_j, V\setminus U)\leq \text{dist}_G(w_{j+1}, V\setminus U) \hspace{20pt} \forall j=1, \dots, L-1.
\]

 Let~$\Gamma_j$ be the set of edges along a shortest path from~$w_j$ to~$U\setminus B_{r-1}(u_0)$.
%Note that~$\Gamma_j$ deletes the last edge of~$\gamma_j$.
Further for~$m<m'$ let
\[
[(w_i, m), (w_i, m')]:=\cup_{j=m}^{m'-1}\{(w_i, j), (w_i, j+1)\}.
\]

Now,~$\bar{x}$ is defined in the following way:
 \begin{itemize}
 \setlength\itemsep{0.3em}
  \item~$\bar{x}= (\bar{x}^{U}, \bar{x}^{\partial, 1}, \bar{x}^{\partial, 2})$ with~$\bar{x}^{U} \in \Omega_0^1 \times \dots \times \Omega_0^{K+L}$ and~$\bar{x}^{\partial, 1}, \bar{x}^{\partial, 2} \in \Omega_0^{\partial}$;
  \item~$\bar{x}^{U}(e)=1$ if and only if for some~$j=1, \dots L$,
  \begin{align*}
 e \in [(w_j, 0),& (w_j, j)] \cup [(w_j, (2L+2)-j), (w_j, (2L+2))] \\
  &\bigcup_{\{u, v\}\in \Gamma_j} \left(\{(u, j), (v, j)\} \cup \{(u, (2L+2)-j), (v, (2L+2)-j)\}\right),  
  \end{align*}
  or
  \[
  e\in \bigcup_{u, v\in U} \{(u, L+1), (v, L+1)\};
  \]
  \item~$\bar{x}^{\partial, 1}\equiv 0 \text{ and } \bar{x}^{\partial, 2}\equiv 1$.
 \end{itemize}

 \begin{figure}[ht]
\centering
 \begin{tikzpicture}
\draw [dotted] (0.5,0)  -- (3.5,0) ;
% \draw [dashed](0,8)  -- (4,8) ;
% \draw  (5,0)  -- (9,0) ;
% \draw [dashed](5,8)  -- (9,8) ;
% \draw (10,0)  -- (14,0) ;
% \draw [dashed](10,8)  -- (14,8) ;

% \draw (2,-0.1)  node [below] {0};

\foreach \n in {2,7,12}
    \foreach \m in {-0.1}
 	    \draw (\n,\m) node [below] {0};

\foreach \n in {2,7,12}
    \foreach \m in {-0.1}
 	    \draw (\n,\m)  -- ++(0,0.2) ;

 \draw (2,-0.5)  node [below] {$\bar{x}^U$};
\draw (7,-0.5)  node [below] {$\bar{x}^{\partial, 1}$};
\draw (12,-0.5)  node [below] {$\bar{x}^{\partial, 2}$};

%\draw (0.75,0)  node [below] {$e_2$};
% \draw (1,0)  node [below] {-1};
% \draw (1.5,-0.1)  node [below] {0};
% \draw (1.75,0)  node [below] {$e_1$};
% \draw (2.25,0)  node [below] {$e_3$};

% \foreach \n in {1.5,6.5,13}
%       \foreach \m in {-0.1}
% 	    \draw (\n,\m)  -- ++(0,0.2) ;
% \foreach \n in {0.5,1,2,2.5}
% 	    \draw (\n,-0.1)  -- ++(0,0.2) ;

  \foreach \n in {0.5,1,1.5,2,2.5,3,3.5}
 	    \draw [dotted](\n,0)  -- (\n,8) ;
  
  \foreach \m in {0,0.5,1,1.5,2,2.5,3,3.5,4,4.5,5,5.5,6,6.5,7,7.5}
 	    \draw [dotted](0.5,\m)  -- (3.5,\m) ;
  
  \draw  (0.5,0)  -- (0.5,0.5) ;
  \draw  (0.5,7.5)  -- (0.5,8) ;
  \draw  (1,0)  -- (1,1.5) ;
  \draw  (1,6.5)  -- (1,8) ;
  \draw  (1.5,0)  -- (1.5,2.5) ;
  \draw  (1.5,5.5)  -- (1.5,8) ;
  \draw  (2,0)  -- (2,3.5) ;
  \draw  (2,4.5)  -- (2,8) ;
  \draw  (2.5,0)  -- (2.5,3) ;
  \draw  (2.5,5)  -- (2.5,8) ;
  \draw  (3,0)  -- (3,2) ;
  \draw  (3,6)  -- (3,8) ;
  \draw  (3.5,0)  -- (3.5,1) ;
  \draw  (3.5,7)  -- (3.5,8) ;
  
  \draw  (0.5,1.5)  -- (1,1.5) ;
  \draw  (3,2)  -- (3.5,2) ;
  \draw  (0.5,2.5)  -- (1.5,2.5) ;
  \draw  (2.5,3)  -- (3.5,3) ;
  \draw  (0.5,3.5)  -- (2,3.5) ;
  \draw  (0.5,4)  -- (3.5,4) ;
  \draw  (0.5,4.5)  -- (2,4.5) ;
  \draw  (2.5,5)  -- (3.5,5) ;
  \draw  (0.5,5.5)  -- (1.5,5.5) ;
  \draw  (3,6)  -- (3.5,6) ;
  \draw  (0.5,6.5)  -- (1,6.5) ;

\draw [dotted] (5,0)  -- (5,8) ;
  \draw [dotted] (9,0)  -- (9,8) ;
  \foreach \n in {0,0.5,1,1.5,2,2.5,3,3.5,4,4.5,5,5.5,6,6.5,7,7.5}
	\foreach \m in {5,8.5}
		\draw [dotted] (\m,\n)-- ++(0.5,0);

 \draw  (10,0)  -- (10,8) ;
  \draw  (14,0)  -- (14,8) ;
  \foreach \n in {0,0.5,1,1.5,2,2.5,3,3.5,4,4.5,5,5.5,6,6.5,7,7.5}
	\foreach \m in {10,13.5}
		\draw  (\m,\n)-- ++(0.5,0);

\end{tikzpicture}
 \caption{The deterministic configuration for $G=\mathbb{Z}$, $U=\{-3, -2, -1, 0, 1, 2, 3\}$. In this case  $L=7$, $K=6$ and $w_1=-3, w_2=3, w_3=-2, w_4=2, w_5=-1, w_6=1, w_7=0$.} \label{fig:xbar}
\end{figure}
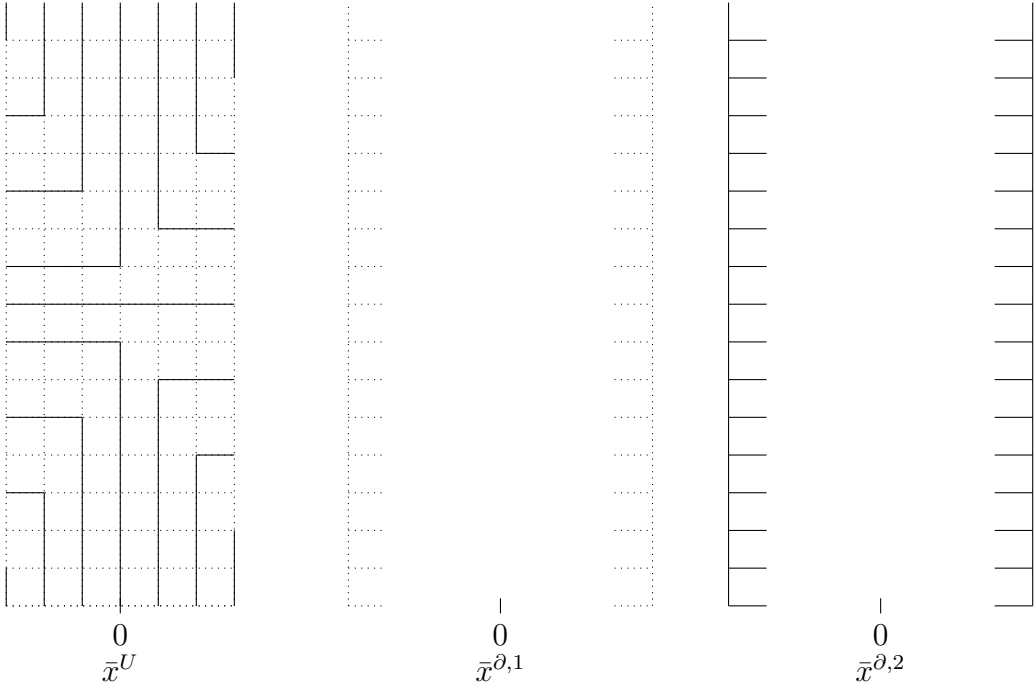
 
 By Lemma~\ref{lemma:coupling1}, if~$\delta$ is close enough to zero, then there exists a coupling of~$(K+L+2)$-tuples of configurations
 \begin{align*}
  X=(X^1, \dots, X^{K+L}, X^{\partial, 1}, X^{\partial, 2}),\; Y=(Y^1, \dots, Y^{K+L}, Y^{\partial, 1}, Y^{\partial, 2})\\
  \in \Omega^1_0 \times \dots \times \Omega^{K+L}_0 \times \Omega_0^{\partial} \times \Omega_0^{\partial}
  \end{align*}
such that
 \begin{itemize}
  \setlength\itemsep{0.3em}
  \item the values of~$X^1, \dots, X^{K+L}, X^{\partial, 1}, X^{\partial, 2}$ are independent on all edges;
  \item the values of~$Y^1, \dots, Y^{K+L}, Y^{\partial , 1}, Y^{\partial, 2}$ are independent on all edges;
  \item~$X^i$ assigns each edge to be open with probability~$q_i$;
  \item~$Y^i$ assigns each edge to be open with probability~$q'_i$;
  \item~$X^{\partial, 1}$ and~$Y^{\partial, 1}$ assign each edge to be open with probability~$p$;
  \item~$X^{\partial, 2}$ and~$Y^{\partial, 2}$ assign each edge to be open with probability~$\frac{\epsilon}{1-p}$;
  \item~$(X, Y)$ satisfies \begin{equation}\label{eq:speccoupling}
 \P\left(\{X=Y\}\cup\{X=\bar{x}\}\cup\{Y= \bar{x}\} \right)=1.
\end{equation}
 \end{itemize}
 Now let~$\omega_0=(X^1, \dots, X^{K+L}, X^{\partial, 1})$ and~$\omega_0'=(Y^1, \dots, Y^{K+L}, Y^{\partial, 1}\vee Y^{\partial, 2})$. 
 Thus~$\omega_0'$ assigns each edge in~$\mathbb{E}_0^{\partial}$ to be open with probability~$p+\epsilon$. 
 See Figure~\ref{fig:fixedconfigmulti} for~$\omega_0$ and~$\omega'_0$ if~$X$ or~$Y$ equals~$\bar{x}$.
 
 \begin{figure}[ht]
\centering
 \begin{tikzpicture}
%    \draw (0,0)  -- (5,0) ;

%\draw (0.75,0)  node [below] {$e_2$};
% \draw (1,0)  node [below] {-1};
% \draw (1.5,-0.1)  node [below] {0};
% \draw (1.75,0)  node [below] {$e_1$};
% \draw (2.25,0)  node [below] {$e_3$};

% \foreach \n in {1.5,6.5,13}
%       \foreach \m in {-0.1}
% 	    \draw (\n,\m)  -- ++(0,0.2) ;
% \foreach \n in {0.5,1,2,2.5}
% 	    \draw (\n,-0.1)  -- ++(0,0.2) ;

%   \draw [green] (0.5,0)  -- (0.5,8) ;
%   \draw [green] (4.5,0)  -- (4.5,8) ;
%   \foreach \n in {0,0.5,1,1.5,2,2.5,3,3.5,4,4.5,5,5.5,6,6.5,7,7.5}
% 	\foreach \m in {0.5,4}
% 		\draw [green] (\m,\n)-- ++(0.5,0);
%   
  
  \foreach \m in {0,0.5,1,1.5,2,2.5,3,3.5,4,4.5,5,5.5,6,6.5,7,7.5}
      \draw [dotted](0.5,\m)  -- (4.5,\m) ;
   \foreach \n in {0.5,1,1.5,2,2.5,3,3.5,4,4.5}
      \draw [dotted](\n,0)  -- (\n,8) ;

  \draw  (1,0)  -- (1,0.5) ;
  \draw  (1,7.5)  -- (1,8) ;
  \draw  (1.5,0)  -- (1.5,1.5) ;
  \draw  (1.5,6.5)  -- (1.5,8) ;
  \draw  (2,0)  -- (2,2.5) ;
  \draw  (2,5.5)  -- (2,8) ;
  \draw  (2.5,0)  -- (2.5,3.5) ;
  \draw  (2.5,4.5)  -- (2.5,8) ;
  \draw  (3,0)  -- (3,3) ;
  \draw  (3,5)  -- (3,8) ;
  \draw  (3.5,0)  -- (3.5,2) ;
  \draw  (3.5,6)  -- (3.5,8) ;
  \draw  (4,0)  -- (4,1) ;
  \draw  (4,7)  -- (4,8) ;
  
  \draw  (1,1.5)  -- (1.5,1.5) ;
  \draw  (3.5,2)  -- (4,2) ;
  \draw  (1,2.5)  -- (2,2.5) ;
  \draw  (3,3)  -- (4,3) ;
  \draw  (1,3.5)  -- (2.5,3.5) ;
  \draw  (1,4)  -- (4,4) ;
  \draw  (1,4.5)  -- (2.5,4.5) ;
  \draw  (3,5)  -- (4,5) ;
  \draw  (1,5.5)  -- (2,5.5) ;
  \draw  (3.5,6)  -- (4,6) ;
  \draw  (1,6.5)  -- (1.5,6.5) ;

 \draw (2.5,-0.5)  node [below] {$\omega_0$ if $X=\bar{x}$};

 %  \draw (6,0)  -- (11,0) ;
%\draw [dashed](6.5,8)  -- (10.5,8) ;
%\draw (0.75,0)  node [below] {$e_2$};
% \draw (1,0)  node [below] {-1};
% \draw (1.5,-0.1)  node [below] {0};
% \draw (1.75,0)  node [below] {$e_1$};
% \draw (2.25,0)  node [below] {$e_3$};

% \foreach \n in {1.5,6.5,13}
%       \foreach \m in {-0.1}
% 	    \draw (\n,\m)  -- ++(0,0.2) ;
% \foreach \n in {0.5,1,2,2.5}
% 	    \draw (\n,-0.1)  -- ++(0,0.2) ;

 \foreach \m in {0,0.5,1,1.5,2,2.5,3,3.5,4,4.5,5,5.5,6,6.5,7,7.5}
      \draw [dotted](6.5,\m)  -- (10.5,\m) ;
   \foreach \n in {0.5,1,1.5,2,2.5,3,3.5,4,4.5}
      \draw [dotted](\n+6,0)  -- (\n+6,8) ;
  
  \draw  (6.5,0)  -- (6.5,8) ;
  \draw  (10.5,0)  -- (10.5,8) ;
  \foreach \n in {0,0.5,1,1.5,2,2.5,3,3.5,4,4.5,5,5.5,6,6.5,7,7.5}
	\foreach \m in {6.5,10}
		\draw  (\m,\n)-- ++(0.5,0);
  
  \draw  (7,0)  -- (7,0.5) ;
  \draw  (7,7.5)  -- (7,8) ;
  \draw  (7.5,0)  -- (7.5,1.5) ;
  \draw  (7.5,6.5)  -- (7.5,8) ;
  \draw  (8,0)  -- (8,2.5) ;
  \draw  (8,5.5)  -- (8,8) ;
  \draw  (8.5,0)  -- (8.5,3.5) ;
  \draw  (8.5,4.5)  -- (8.5,8) ;
  \draw  (9,0)  -- (9,3) ;
  \draw  (9,5)  -- (9,8) ;
  \draw  (9.5,0)  -- (9.5,2) ;
  \draw  (9.5,6)  -- (9.5,8) ;
  \draw  (10,0)  -- (10,1) ;
  \draw  (10,7)  -- (10,8) ;
  
  \draw  (7,1.5)  -- (7.5,1.5) ;
  \draw (9.5,2)  -- (10,2) ;
  \draw  (7,2.5)  -- (8,2.5) ;
  \draw  (9,3)  -- (10,3) ;
  \draw  (7,3.5)  -- (8.5,3.5) ;
  \draw  (7,4)  -- (10,4) ;
  \draw  (7,4.5)  -- (8.5,4.5) ;
  \draw  (9,5)  -- (10,5) ;
  \draw  (7,5.5)  -- (8,5.5) ;
  \draw  (9.5,6)  -- (10,6) ;
  \draw  (7,6.5)  -- (7.5,6.5) ;
  
  \draw (8.5,-0.5)  node [below] {$\omega'_0$ if $Y=\bar{x}$};

\end{tikzpicture}
 \caption{$\omega_0$ and $\omega'_0$ on the fixed configurations for $G=\mathbb{Z}$, $U=\{-3, -2, -1, 0, 1, 2, 3\}$.} \label{fig:fixedconfigmulti}
\end{figure}

 To check that the last property stated in \eqref{eq:prop_coup2} is satisfied, let us inspect $C_0(A,\omega_0)$ and~$C_0(A,\omega'_0)$ in all possible cases listed inside the probability in \eqref{eq:speccoupling}: 
 \begin{itemize}
  \setlength\itemsep{0.3em}
  \item if~$X=Y$, then~$\omega_0(e)\leq \omega_0'(e)$ for every~$e\in \mathbb{E}_0$, thus~$C_0(A,\omega_0)\subseteq C_0(A,\omega'_0)$ for all~$A$;
  \item if~$X=\bar{x}$, then~$C_0(A,\omega_0)=A \subseteq C_0(A, \omega'_0)$ for all~$A$;
  \item if~$Y=\bar{x}$, then~$C_0(A,\omega'_0)=\partial \mathbb{V}_0 \supseteq C_0(A,\omega_0)$ for all~$A$.
 \end{itemize}
 Hence in all cases~$C_0(A,\omega_0)\subseteq C_0(A,\omega'_0)$ for every~$A\subseteq \partial \mathbb{V}_0$. We then let~$\mu_0$ be the distribution of~$(\omega_0,\omega'_0)$, completing the proof.
  
\end{proof}

\section{Proof of Theorem \ref{thm:orpercmulti}}
\label{s:proof2}
We start with a similar reduction to a particular case as the one in the beginning of the previous section.
As the proof of Claim~\ref{claim:contmm-1} did not rely on any special properties of~$\mathbb{G}$ (that~$\vec{\mathbb{G}}$ does not have), we can repeat the same argument in the oriented case.
We fix~$r \in \mathbb{N}$,~$u_0 \in V$ and let~$U:=B_r(u_0)$ as in the unoriented case. \emph{From now on, we assume that the edges~$e_1,\ldots, e_K$ of \eqref{eq:edges2} are all the edges with both endpoints belonging to~$U$.}

We again obtain the desired statement of Theorem~\ref{thm:orpercmulti} as a consequence of the following claim.
\begin{claim}\label{claim:orpqsurv}
 For all~$p\in(0,1)$,~$\bold{q_0}\in(0,1)^{K}$ and~$\epsilon\in(0, 1-p)$ there exists a~$\delta>0$ such that for any~$\bold{q}, \bold{q'}\in(0,1)^{K}$ satisfying~$\|\bold{q_0}-\bold{q}\|_{\infty}<\delta$ and~$\|\bold{q_0}-\bold{q'}\|_{\infty}<\delta$ we have
 \[
 \vec{\mathbb{P}}_{\bold{q}, p}(\vec{C}_{\infty})\leq\vec{\mathbb{P}}_{\bold{q'}, p+\epsilon}(\vec{C}_{\infty}).
 \]
\end{claim}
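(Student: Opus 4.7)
The plan is to imitate the proof of Claim~\ref{claim:pqsurv}, replacing Lemma~\ref{lemma:coupling1} by Lemma~\ref{lemma:coupling2} and the single box configuration $\bar x$ by two distinguished elements $\hat x$ and $\doublehat x$. The preliminary reduction (the oriented analog of Claim~\ref{claim:contmm-1}, noted at the start of this section) lets us assume $e_1,\ldots,e_K$ are all the edges of $E$ with both endpoints in $U=B_r(u_0)$, and the deduction of Theorem~\ref{thm:orpercmulti} from Claim~\ref{claim:orpqsurv} is verbatim the unoriented version. I would then split $\vec{\mathbb{G}}$ into boxes $\vec{\mathbb{V}}_n=\{(v,m):v\in B_{r+1}(u_0),\ (2r+2)n\le m\le (2r+2)(n+1)\}$ and set up $\vec{\mathbb{E}}_n,\vec{\mathbb{E}}_n^i,\vec{\mathbb{E}}_n^\partial,\vec{\mathbb{E}}_\mathcal{O}$, the inner boundary $\partial\vec{\mathbb{V}}_n$ (floor, walls, ceiling), and the forward cluster $\vec C_n(A,\omega_n)=\{(v,m)\in\partial\vec{\mathbb{V}}_n:(v_0,n_0)\stackrel{\omega_n}{\to}(v,m)\text{ for some }(v_0,n_0)\in A\}$ in direct analogy with Section~\ref{s:proof1}. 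The target is a coupling $\mu=\mu_\mathcal{O}\otimes\bigotimes_n\mu_n$ of $\vec{\mathbb{P}}_{\bold{q},p}$ with $\vec{\mathbb{P}}_{\bold{q}',p+\epsilon}$ under which $\vec C_n(A,\omega_n)\subseteq\vec C_n(A,\omega_n')$ almost surely for every $n$ and every $A\subseteq\partial\vec{\mathbb{V}}_n$; propagating this forward-cluster domination from box to box yields $\vec C_\infty(\omega)\subseteq\vec C_\infty(\omega')$, hence the claim.

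To build $\mu_0$ I would apply Lemma~\ref{lemma:coupling2} on the finite set $S=\vec\Omega_0^1\times\cdots\times\vec\Omega_0^K\times\vec\Omega_0^\partial\times\vec\Omega_0^\partial$, with $\omega_0=(X^1,\ldots,X^K,X^{\partial,1})$ and $\omega_0'=(Y^1,\ldots,Y^K,Y^{\partial,1}\vee Y^{\partial,2})$, so that the two marginals have the required distributions. I would take $\hat x$ to be the all-closed element of $S$ and $\doublehat x=(\doublehat x^U,\mathbf 0,\mathbf 1)$, where $\doublehat x^U$ is a pattern of open oriented $q$-edges chosen so that \emph{(i)} the configuration $(\doublehat x^U,\mathbf 1)$ admits a forward path from every vertex of $\partial\vec{\mathbb{V}}_0$ to every strictly later-time vertex of $\partial\vec{\mathbb{V}}_0$, and \emph{(ii)} the configuration $(\doublehat x^U,\mathbf 0)$ contains no forward path between distinct vertices of $\partial\vec{\mathbb{V}}_0$. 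I would partition $S=\hat S\sqcup\doublehat S$ by declaring $x\in\hat S$ iff $(x^U,x^{\partial,1}\vee x^{\partial,2})$ has no forward path between distinct boundary vertices of $\vec{\mathbb{V}}_0$; then $\hat x\in\hat S$ and $\doublehat x\in\doublehat S$, and both have positive probability under the $\bold{q}$-law, so Lemma~\ref{lemma:coupling2} applies. The containment $\vec C_0(A,\omega_0)\subseteq\vec C_0(A,\omega_0')$ follows immediately in each of the four events in~\eqref{eq:coupling2}: on $\{X=Y\}$, $\omega_0\le\omega_0'$ pointwise; on $\{X=\hat x\}$, $\omega_0$ has no open edges; on $\{Y=\doublehat x\}$, $\omega_0'$ is maximally forward-connected by (i); and on $\{X\in\hat S\cup\{\doublehat x\},\,Y=\hat x\}$, $\omega_0'$ is empty while $\omega_0\le(X^U,X^{\partial,1}\vee X^{\partial,2})$ combined with (ii) (when $X=\doublehat x$) or with the defining property of $\hat S$ (when $X\in\hat S$) forces $\omega_0$ to have no boundary-to-boundary forward paths either.

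The principal obstacle, and the reason two reference configurations are needed rather than one, is the explicit construction of $\doublehat x^U$ satisfying the opposing conditions (i) and (ii). In the oriented setting $\vec{\mathbb{E}}$ contains no ``self-vertical'' edges: every oriented edge moves to a strictly different neighbor, so long upward transport strictly inside $U$, as well as transport along the walls via $\vec{\mathbb{E}}_0^\partial$, both require zigzagging between a vertex and a fixed neighbor. I would build $\doublehat x^U$ as the oriented zigzag analog of the structure in Figure~\ref{fig:xbar}: short zigzag $q$-paths starting from each floor vertex $(u,0)$ and each ceiling vertex $(u,2r+2)$ and running into the interior of $U$, together with a mid-height slab of $q$-edges joining their inner endpoints near level $r+1$. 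Property (i) then holds because the walls, which are reached from these zigzag endpoints via the $\partial$-edges open in $\omega_0'$, supply the missing vertical transport between the two halves of the box; property (ii) holds because every open $q$-edge of $\doublehat x^U$ has both endpoints strictly inside $U$, so without any open $\partial$-edge no two vertices of $\partial\vec{\mathbb{V}}_0$ can be linked. The box height $2r+2$ and the precise shape of the zigzag paths may need to be enlarged in terms of the local geometry of $G$ near $u_0$ (in particular, to ensure enough wall edges exist to enable upward transport in $\vec{\mathbb{E}}_0^\partial$), and carrying out this combinatorial design is the main technical work of the proof.
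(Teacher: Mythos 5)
Your overall architecture --- the box decomposition, the product coupling $\vec{\mu}=\vec{\mu}_\mathcal{O}\otimes\bigl(\otimes_n\vec{\mu}_n\bigr)$, and the appeal to Lemma~\ref{lemma:coupling2} with two distinguished configurations --- matches the paper's proof. The gap is in your choice of $\hat{x}$, $\doublehat{x}$ and, specifically, in condition (i). As stated, (i) is unsatisfiable: no configuration, not even the fully open one, connects every boundary vertex to every strictly later boundary vertex, since $(u,0)\to(v,1)$ requires $\{u,v\}\in E$. What your case $\{Y=\doublehat{x}\}$ actually requires is that $(\doublehat{x}^U,\mathbf{1})$ connect at least as well as the worst-case $\omega_0$, which is the \emph{fully open} configuration on $\vec{\mathbb{E}}_0$; i.e.\ $\vec{C}_0(A,(\doublehat{x}^U,\mathbf{1}))\supseteq\vec{C}_0(A,\mathbf{1})$ for every $A$. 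A sparse, Figure~\ref{fig:xbar}-style zigzag skeleton cannot achieve this: oriented connectivity is rigid in time (each step raises the height by exactly one and moves to a neighbour in $G$), so reproducing \emph{all} the length-constrained walks of the fully open box using only a few designated $q$-paths plus the walls is hopeless --- for instance, reaching an interior ceiling vertex $(v,2r+2)$ with $\mathrm{dist}(v,u_0)<r$ from a given floor vertex at exactly the right time generally needs interior $q$-edges you have closed. Thus the configuration you sketch fails the (corrected) condition (i), while any configuration dense enough to satisfy (i) is in direct tension with (ii); you have loaded two opposing requirements onto the single object $\doublehat{x}^U$ and deferred the contradiction to ``the main technical work.''

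The paper resolves exactly this tension by splitting the two requirements between the \emph{two} configurations: it takes $\doublehat{x}^i\equiv 1$ (so $\omega_0'=\mathbf{1}$ when $Y=\doublehat{x}$ and that case is trivial), takes $\hat{x}^i$ to be all-open except for the $q$-edges from height $K$ to $K+1$ (a single-height cut), and defines $\hat{S}$ as the product set of configurations whose $q$-edges at that height are all closed, so that $X\in\hat{S}$, $Y=\hat{x}$ yields the pointwise bound $\omega_0\le\omega_0'$. The only nontrivial case is then $X=\doublehat{x}$, $Y=\hat{x}$, where $\omega_0=(\mathbf{1},\mathbf{0})$ admits \emph{only} floor-to-ceiling connections through $U$, and these are rerouted around the cut via the fully open wall edges of $\omega_0'$ --- a far weaker statement than preserving every connection of $\mathbf{1}$. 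Your choices of $\hat{x}$ as the all-closed element and of $\hat{S}$ via a connectivity property are admissible for Lemma~\ref{lemma:coupling2}, and your treatment of the cases $\{X=Y\}$, $\{X=\hat{x}\}$ and $\{X\in\hat{S}\cup\{\doublehat{x}\},\,Y=\hat{x}\}$ is internally consistent; but the existence of $\doublehat{x}^U$, which you correctly identify as the crux, is not established, and the construction you propose does not provide it.
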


 Theorem~\ref{thm:orpercmulti} follows from this claim by the same argument as in the unoriented case, so we omit the details.

 \begin{remark}\label{rem:nonor}
 The proof of Claim~\ref{claim:orpqsurv} is similar to that of Claim~\ref{claim:pqsurv} but slightly more involved.
In the proof of the unoriented case we used Lemma~\ref{lemma:coupling1} with a single determinisitic configuration~$\bar{x}=(\bar{x}^U, \bar{x}^{\partial, 1}, \bar{x}^{\partial, 2})$.
This was possible because our choice of~$\bar{x}$ was such that, for every~$\omega_0\in\Omega_0$ and~$A\subseteq\partial\mathbb{V}_0$ we have
\begin{align*}
 &C_0(A, (\bar{x}^U, \bar{x}^{\partial, 1}))=A \subseteq C_0(A, \omega_0),\\
 &C_0(A, (\bar{x}^U, \bar{x}^{\partial, 1} \vee \bar{x}^{\partial, 2}))=\partial\mathbb{V}_0 \supseteq C_0(A, \omega_0).
\end{align*}
However, we cannot find a configuration with similar properties in the oriented case (see Remark~\ref{rem:orproblem} at the end of the proof).
\end{remark}

\begin{proof}[Proof of Claim~\ref{claim:orpqsurv}]
%The proof is similar to that of Claim~\ref{claim:pqsurv} but slightly more involved.
Let
\[
 \mathbb{V}_n=\{(v, m)\in\mathbb{V}: v\in B_{r+1}(u_0), (2K+2)n\leq m \leq (2K+2)(n+1)\}
\]
and 
\[
 \vec{\mathbb{E}}_n=\{ e\in\vec{\mathbb{E}}: e \text{ has both endpoints in }\mathbb{V}_n\}.
\]
Note that~$\vec{\mathbb{E}}_n$ are disjoint.
Next, recall the definition of~$\vec{\mathbb{E}}^{i}$ from \eqref{eq:eioriented} and define, for~$n\in\mathbb{Z}$ and~$1\leq i \leq K$,
$$ \vec{\mathbb{E}}^i_n = \vec{\mathbb{E}}_n \cap \vec{\mathbb{E}}^i,\qquad \vec{\mathbb{E}}^\partial_n = \vec{\mathbb{E}}_n \backslash \left(\cup_{i=1}^{K} \vec{\mathbb{E}}^i_n\right),\qquad \vec{\mathbb{E}}_\mathcal{O} = \vec{\mathbb{E}} \backslash \left( \cup_{n \in \mathbb{Z}} \vec{\mathbb{E}}_n\right).$$
The ``edge boundary''~$\vec{\mathbb{E}}^\partial_n$ consists of edges of the form~$\langle(u,m),(v,m+1)\rangle$, with~$u, v \in\mathbb{V}_n$ and at least one of~$u$ and~$v$ at distance~$r+1$ from~$u_0$. Define corresponding sets of configurations~$\vec{\Omega}_n^i$,~$\vec{\Omega}_n^\partial$ and~$\vec{\Omega}_\mathcal{O}$.

For each~$n$, define the boundary sets
\begin{align*}&\underline{\partial} \mathbb{V}_n = \{(v,n) \in \mathbb{V}_n: \text{dist}(v, u_0)=r+1\} \cup (\mathbb{V}_n \cap (\mathbb{V} \times \{(2K+2)n\})),\\
&\overline{\partial}\mathbb{V}_n =  \{(v,n) \in \mathbb{V}_n: \text{dist}(v, u_0)=r+1\} \cup (\mathbb{V}_n \cap (\mathbb{V} \times \{(2K+2)(n+1)\})),
\end{align*}
so that~$\underline{\partial} \mathbb{V}_n$ consists of ``walls and floor'' and~$\overline{\partial}\mathbb{V}_n$ consists of ``walls and ceiling'' of the box~$\mathbb{V}_n$. Given any~$\varnothing\neq A\subseteq \underline{\partial}\mathbb{V}_n$ and~$\omega_n \in \vec{\Omega}_n$, define
 \begin{align*}
  \vec{C}_n(A,\omega_n) = \{(v,n) \in  \overline{\partial} \mathbb{V}_n: (v_0,n_0) \stackrel{\omega_n}{\longrightarrow} (v,n) \text{ for some }(v_0,n_0) \in A\},
  \end{align*}
where the notation~$(v_0,n_0)  \stackrel{\omega_n}{\longrightarrow} (v,n)$ means that~$(v_0,n_0)$ and~$(v,n)$ are connected by an~$\omega_n$-open path of edges of~$\vec{\mathbb{E}}_n$.

Fix~$p$,~$\bold{q_0}$ and~$\epsilon$, and for~$\delta$ close enough to zero let~$\bold{q}=(q_1, \dots, q_{K})$ and~$\bold{q'}=(q'_1, \dots, q'_{K})$ be as in the statement of the claim. 
We will define coupling measures~$\vec{\mu}_\mathcal{O}$ on~$(\vec{\Omega}_\mathcal{O})^2$ and~$\vec{\mu}_n$ on~$(\vec{\Omega}_n)^2$ that satisfy similar properties as in the unoriented case. First, 
\begin{equation}\label{eq:orprop_coup1}\begin{split}&(\omega_\mathcal{O},\omega'_\mathcal{O}) \sim \vec{\mu}_\mathcal{O} \quad \Longrightarrow \quad \omega_\mathcal{O} \stackrel{\text{(d)}}{=} \vec{\mathbb{P}}_{\bold{q}, p}\vert_{\vec{\mathbb{E}}_\mathcal{O}},\quad \omega'_\mathcal{O} \stackrel{\text{(d)}}{=} \vec{\mathbb{P}}_{\bold{q'}, p+\epsilon}\vert_{\vec{\mathbb{E}}_\mathcal{O}} \\[.2cm]&\hspace{7cm}\text{ and } \omega_\mathcal{O} \leq \omega'_\mathcal{O} \text{ a.s.}\end{split}\end{equation}
Second, 
\begin{equation}\label{eq:orprop_coup2}\begin{split}&(\omega_n,\omega'_n) \sim \vec{\mu}_n \quad \Longrightarrow \quad \omega_n \stackrel{\text{(d)}}{=} \vec{\mathbb{P}}_{\bold{q}, p}\vert_{\vec{\mathbb{E}}_n},\quad \omega'_n \stackrel{\text{(d)}}{=} \vec{\mathbb{P}}_{\bold{q'}, p+\epsilon}\vert_{\vec{\mathbb{E}}_n} \\[.2cm]&\hspace{4cm}\text{ and } \vec{C}_n(A,\omega_n) \subseteq \vec{C}_n(A,\omega'_n) \text{ for all } A \subset \underline{\partial} \mathbb{V}_n \text{ a.s.}\end{split}\end{equation}
We then define the coupling measure~$\vec{\mu}$ on~$\vec{\Omega}^2$ by
$$\vec{\mu} = \vec{\mu}_\mathcal{O} \otimes \left(\otimes_{n \in \mathbb{Z}} \vec{\mu}_n \right).$$
It is clear from \eqref{eq:orprop_coup1} and \eqref{eq:orprop_coup2} that, if~$(\omega, \omega') \sim \vec{\mu}$, then~$\omega \sim \vec{\mathbb{P}}_{\bold{q}, p}$,~$\omega' \sim \vec{\mathbb{P}}_{\bold{q'}, p+\epsilon}$, and almost surely if~$\vec{C}_{\infty}$ holds for~$\omega$, then it holds for~$\omega'$.
Consequently
\[
 \vec{\mathbb{P}}_{\bold{q}, p}(\vec{C}_{\infty})\leq\vec{\mathbb{P}}_{\bold{q'}, p+\epsilon}(\vec{C}_{\infty}).
\]

The measure~$\vec{\mu}_\mathcal{O}$ is defined using the same standard coupling as the corresponding measure in the proof of Claim~\ref{claim:pqsurv}. The measures~$\vec{\mu}_n$ will again be taken as translations of each other, so we only define~$\vec{\mu}_0$.
The construction relies on Lemma~\ref{lemma:coupling2}. The finite set~$S$ and the decomposition~$S = \hat{S} \cup \doublehat{S}$ of the statement of that lemma are given by
\begin{align*}S= \vec{\Omega}_0^1 \times \cdots \times \vec{\Omega}^{K}_0 \times \vec{\Omega}^\partial_0 \times \vec{\Omega}^\partial_0,\hspace{8pt} \hat{S} = \vec{\Lambda}_0^1 \times \cdots \times \vec{\Lambda}^K_0 \times \vec{\Omega}^\partial_0 \times \vec{\Omega}^\partial_0,\hspace{8pt} \doublehat{S} = S \backslash \hat{S},
\end{align*}
where~$\vec{\Lambda}_0^i$ is the set of configurations in~$\vec{\Omega}_0^i$ in which edges from height~$K$ to height~$K+1$ are closed. The definition of~$\hat{x}$ and~$\doublehat{x}$ is as follows (see Figure~\ref{fig:xstarbar} for a specific example):
 
 \begin{figure}[ht]
\centering
 \begin{tikzpicture}
 \foreach \n in {1,4.5,8.5}
      \foreach \m in {-0.1,-6.1}
	    \draw (\n,\m)  -- ++(0,0.2) ;

   \draw (0,0)  -- (2.5,0) ;
 \draw (1,-0.1)  node [below] {0};

  \draw [dashed] (0.5,0)  -- (0.5,4) ;
  \draw [dashed] (2,0)  -- (2,4) ;
 \draw [dashed] (0.5,4)  -- (2,4) ;
 \draw (1.25,-0.5)  node [below] {$\hat{x}^U$};

\foreach \n in {1,2}
	\foreach \m in {0,1, 2, 3}
		\draw [->] (\n,\m)-- ++(-0.5,0.5);
\foreach \n in {1,2}
	\foreach \m in {1,3,4}
		\draw [<-] (\n,\m)-- ++(-0.5,-0.5);
\foreach \m in {0,1,2, 3}
		\draw [->] (1,\m)-- ++(0.5,0.5);
\foreach \m in {1, 3,4}
		\draw [<-] (1,\m)-- ++(0.5,-0.5);

 \draw (3,0)  -- (6.5,0) ;
 \draw (4.5,-0.1)  node [below] {0};
 \draw [dashed] (3.5,0)  -- (3.5,4) ;
 \draw [dashed] (4,0)  -- (4,4) ;
 \draw [dashed] (6,0)  -- (6,4) ;
 \draw [dashed] (5.5,0)  -- (5.5,4) ;
 \draw [dashed] (3.5,4)  -- (4,4) ;
 \draw [dashed] (5.5,4)  -- (6,4) ;
\draw (4.75,-0.5)  node [below] {$\hat{x}^{\partial, 1}$};

 \draw (7,0)  -- (10.5,0) ;
 \draw (8.5,-0.1)  node [below] {0};
 \draw [dashed] (7.5,0)  -- (7.5,4) ;
 \draw [dashed] (8,0)  -- (8,4) ;
 \draw [dashed] (10,0)  -- (10,4) ;
 \draw [dashed] (9.5,0)  -- (9.5,4) ;
 \draw [dashed] (7.5,4)  -- (8,4) ;
 \draw [dashed] (9.5,4)  -- (10,4) ;
\draw (8.75,-0.5)  node [below] {$\hat{x}^{\partial, 2}$};

\foreach \n in {7.5,9.5}
	\foreach \m in {0,1,2,3}
		\draw [->] (\n,\m)-- ++(0.5,0.5);
\foreach \n in {7.5,9.5}
	\foreach \m in {1,2,3,4}
		\draw [<-] (\n,\m)-- ++(0.5,-0.5);

\draw (0,-6)  -- (2.5,-6) ;
 \draw (1,-6.1)  node [below] {0};
  
  \draw [dashed] (0.5,-6)  -- (0.5,-2) ;
  \draw [dashed] (2,-6)  -- (2,-2) ;
 \draw [dashed] (0.5,-2)  -- (2,-2) ;
 \draw (1.25,-6.5)  node [below] {$\doublehat{x}^U$};

\foreach \n in {1,2}
	\foreach \m in {0,1, 2, 3}
		\draw [->] (\n,\m-6)-- ++(-0.5,0.5);
\foreach \n in {1,2}
	\foreach \m in {1,2,3,4}
		\draw [<-] (\n,\m-6)-- ++(-0.5,-0.5);
\foreach \m in {0,1,2, 3}
		\draw [->] (1,\m-6)-- ++(0.5,0.5);
\foreach \m in {1,2, 3,4}
		\draw [<-] (1,\m-6)-- ++(0.5,-0.5);

 \draw (3,-6)  -- (6.5,-6) ;
 \draw (4.5,-6.1)  node [below] {0};
 \draw [dashed] (3.5,-6)  -- (3.5,-2) ;
 \draw [dashed] (4,-6)  -- (4,-2) ;
 \draw [dashed] (6,-6)  -- (6,-2) ;
 \draw [dashed] (5.5,-6)  -- (5.5,-2) ;
 \draw [dashed] (3.5,-2)  -- (4,-2) ;
 \draw [dashed] (5.5,-2)  -- (6,-2) ;
\draw (4.75,-6.5)  node [below] {$\doublehat{x}^{\partial, 1}$};

 \draw (7,-6)  -- (10.5,-6) ;
 \draw (8.5,-6.1)  node [below] {0};
 \draw [dashed] (7.5,-6)  -- (7.5,-2) ;
 \draw [dashed] (8,-6)  -- (8,-2) ;
 \draw [dashed] (10,-6)  -- (10,-2) ;
 \draw [dashed] (9.5,-6)  -- (9.5,-2) ;
 \draw [dashed] (7.5,-2)  -- (8,-2) ;
 \draw [dashed] (9.5,-2)  -- (10,-2) ;
\draw (8.75,-6.5)  node [below] {$\doublehat{x}^{\partial, 2}$};

\foreach \n in {7.5,9.5}
	\foreach \m in {0,1,2,3}
		\draw [->] (\n,\m-6)-- ++(0.5,0.5);
\foreach \n in {7.5,9.5}
	\foreach \m in {1,2,3,4}
		\draw [<-] (\n,\m-6)-- ++(0.5,-0.5);

\end{tikzpicture}
 \caption{The deterministic configurations for $G=\mathbb{Z}$ and $U=\{-1, 0, 1, 2\}$. In this case~${K=3}$. Note that only one of the two disjoint subgraphs of~$\vec{\mathbb{G}}$ is displayed.} \label{fig:xstarbar}
\end{figure}
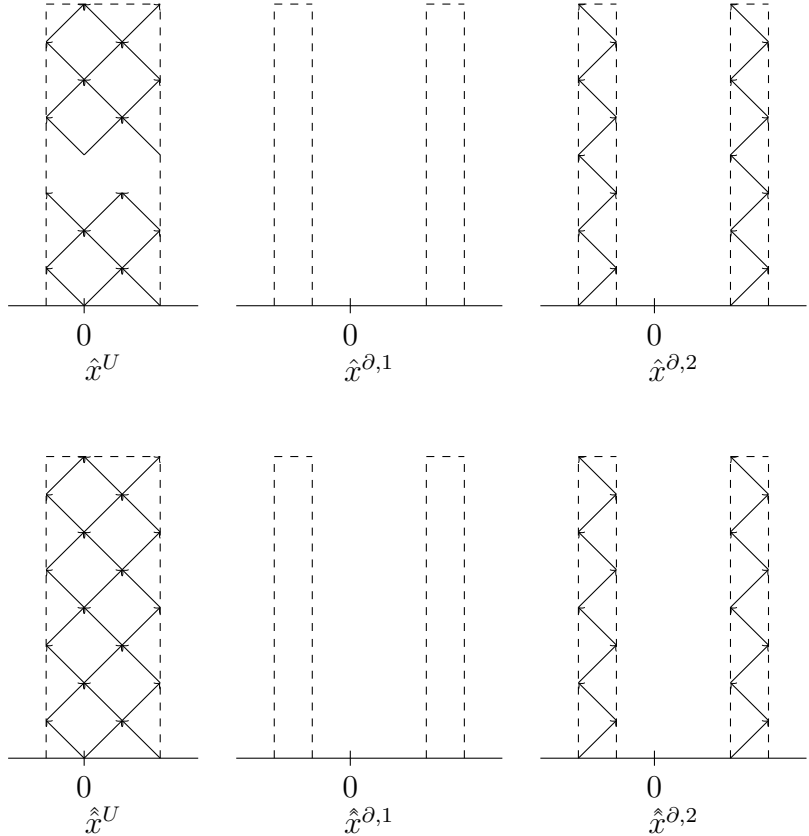
 
 \begin{itemize}
 \setlength\itemsep{0.3em}
 \item~$\hat{x}= (\hat{x}^1,\ldots, \hat{x}^K, \hat{x}^{\partial, 1}, \hat{x}^{\partial, 2})$ with~$\hat{x}^i \in \vec{\Lambda}_{0}^i$ and~$\hat{x}^{\partial, 1}, \hat{x}^{\partial, 2} \in \vec{\Omega}_0^{\partial}$;
  \item~$\doublehat{x}= (\doublehat{x}^1,\ldots, \doublehat{x}^K, \doublehat{x}^{\partial, 1}, \doublehat{x}^{\partial, 2})$ with~$\doublehat{x}^{i} \in \vec{\Omega}_{0}^i\setminus\vec{\Lambda}_{0}^i$ and~$\doublehat{x}^{\partial, 1}, \doublehat{x}^{\partial, 2} \in \vec{\Omega}_0^{\partial}$;
  \item~$\hat{x}^{\partial, 1}\equiv 0$,~$\hat{x}^{\partial, 2}\equiv 1$ and for each~$i$, ~$\hat{x}^{i}(e)=0$  if and only if~$e$ goes from height~$K$ to~$K+1$,;
  \item~$\doublehat{x}^{\partial, 1}\equiv 0$, ~$\doublehat{x}^{\partial, 2}\equiv 1$ and for each~$i$,~$\doublehat{x}^{i}\equiv 1$.
 \end{itemize}

 By Lemma~\ref{lemma:coupling2}, if~$\delta$ is close enough to zero,  there exists a coupling of~$(K+2)$-tuples of configurations
\begin{align*}
  X=(X^1, \dots, X^{K}, X^{\partial, 1}, X^{\partial, 2}), \;Y&=(Y^1, \dots, Y^{K}, Y^{\partial, 1}, Y^{\partial, 2}) \\
  &\in \vec{\Omega}^1_0 \times \dots \times \vec{\Omega}^K_0 \times \vec{\Omega}_0^{\partial} \times \vec{\Omega}_0^{\partial}
  \end{align*}
  such that
 \begin{itemize}
  \setlength\itemsep{0.3em}
  \item the values of~$X^1, \dots, X^K, X^{\partial, 1}, X^{\partial, 2}$ are independent on all edges;
  \item the values of~$Y^1, \dots, Y^K, Y^{\partial, 1}, Y^{\partial, 2}$ are independent on all edges;
  \item~$X^i$ assigns each edge to be open with probability~$q_i$;
  \item~$Y^i$ assigns each edge to be open with probability~$q'_i$;
  \item~$X^{\partial, 1}$ and~$Y^{\partial, 1}$ assign each edge to be open with probability~$p$;
  \item~$X^{\partial, 2}$ and~$Y^{\partial, 2}$ assign each edge to be open with probability~$\frac{\epsilon}{1-p}$;
  \item~$(X, Y)$ satisfies \begin{equation}\label{eq:orspeccoupling}
 \P\left(\{X=Y\}\cup\{X=\hat{x}\}\cup\{X\in \hat{S}\cup\{\doublehat{x}\}, Y=\hat{x}\} \cup\{Y=\doublehat{x}\} \right)=1.
\end{equation}
 \end{itemize}
 
  Now let~$\omega_0=(X^1, \dots, X^K, X^{\partial, 1})$ and~$\omega_0'=(Y^1, \dots, Y^K, Y^{\partial, 1}\vee Y^{\partial, 2})$. 
 Thus~$\omega_0'$ assigns each edge in~$\vec{\mathbb{E}}_0^{\partial}$ to be open with probability~$p+\epsilon$. 
 See Figure~\ref{fig:orfixedconfigsmulti} for~$\omega_0$ and~$\omega'_0$ if~$X$ or~$Y$ equals~$\hat{x}$ or~$\doublehat{x}$.
 
 \begin{figure}[ht]
\centering
 \begin{tikzpicture}
  \foreach \n in {4.5,8.5}
      \foreach \m in {-0.1,-6.1}
	    \draw (\n,\m)  -- ++(0,0.2) ;

 \draw (3,0)  -- (6.5,0) ;
 \draw (4.5,-0.1)  node [below] {0};
 \draw [dashed] (3.5,0)  -- (3.5,4) ;
  \draw [dashed] (6,0)  -- (6,4) ;
 \draw [dashed] (3.5,4)  -- (6,4) ;
\draw (4.75,-0.5)  node [below] {$\omega_0$ if $X=\hat{x}$};

\foreach \n in {1,2}
	\foreach \m in {0,1, 2, 3}
		\draw [->] (\n+3.5,\m)-- ++(-0.5,0.5);
\foreach \n in {1,2}
	\foreach \m in {1,3,4}
		\draw [<-] (\n+3.5,\m)-- ++(-0.5,-0.5);
\foreach \m in {0,1,2, 3}
		\draw [->] (4.5,\m)-- ++(0.5,0.5);
\foreach \m in {1, 3,4}
		\draw [<-] (4.5,\m)-- ++(0.5,-0.5);

 \draw (7,0)  -- (10.5,0) ;
 \draw (8.5,-0.1)  node [below] {0};
 \draw [dashed] (7.5,0)  -- (7.5,4) ;
 \draw [dashed] (10,0)  -- (10,4) ;
 \draw [dashed] (7.5,4)  -- (10,4) ;
\draw (8.75,-0.5)  node [below] {$\omega'_0$ if $Y=\hat{x}$};

\foreach \n in {7.5,9.5}
	\foreach \m in {0,1,2,3}
		\draw [->] (\n,\m)-- ++(0.5,0.5);
\foreach \n in {7.5,9.5}
	\foreach \m in {1,2,3,4}
		\draw [<-] (\n,\m)-- ++(0.5,-0.5);
\foreach \n in {1,2}
	\foreach \m in {0,1, 2, 3}
		\draw [->] (\n+7.5,\m)-- ++(-0.5,0.5);
\foreach \n in {1,2}
	\foreach \m in {1,3,4}
		\draw [<-] (\n+7.5,\m)-- ++(-0.5,-0.5);
\foreach \m in {0,1,2, 3}
		\draw [->] (8.5,\m)-- ++(0.5,0.5);
\foreach \m in {1, 3,4}
		\draw [<-] (8.5,\m)-- ++(0.5,-0.5);

 \draw (3,-6)  -- (6.5,-6) ;
 \draw (4.5,-6.1)  node [below] {0};
 \draw [dashed] (3.5,-6)  -- (3.5,-2) ;
 \draw [dashed] (6,-6)  -- (6,-2) ;
 \draw [dashed] (3.5,-2)  -- (6,-2) ;
\draw (4.75,-6.5)  node [below] {$\omega_0$ if $X=\doublehat{x}$};

\foreach \n in {1,2}
	\foreach \m in {0,1, 2, 3}
		\draw [->] (\n+3.5,\m-6)-- ++(-0.5,0.5);
\foreach \n in {1,2}
	\foreach \m in {1,2,3,4}
		\draw [<-] (\n+3.5,\m-6)-- ++(-0.5,-0.5);
\foreach \m in {0,1,2, 3}
		\draw [->] (4.5,\m-6)-- ++(0.5,0.5);
\foreach \m in {1,2, 3,4}
		\draw [<-] (4.5,\m-6)-- ++(0.5,-0.5);

 \draw (7,-6)  -- (10.5,-6) ;
 \draw (8.5,-6.1)  node [below] {0};
 \draw [dashed] (7.5,-6)  -- (7.5,-2) ;
 \draw [dashed] (10,-6)  -- (10,-2) ;
 \draw [dashed] (7.5,-2)  -- (10,-2) ;
\draw (8.75,-6.5)  node [below] {$\omega'_0$ if $Y=\doublehat{x}$};

\foreach \n in {7.5,9.5}
	\foreach \m in {0,1,2,3}
		\draw [->] (\n,\m-6)-- ++(0.5,0.5);
\foreach \n in {7.5,9.5}
	\foreach \m in {1,2,3,4}
		\draw [<-] (\n,\m-6)-- ++(0.5,-0.5);		
   \foreach \n in {1,2}
	\foreach \m in {0,1, 2, 3}
		\draw [->] (\n+7.5,\m-6)-- ++(-0.5,0.5);
\foreach \n in {1,2}
	\foreach \m in {1,2,3,4}
		\draw [<-] (\n+7.5,\m-6)-- ++(-0.5,-0.5);
\foreach \m in {0,1,2, 3}
		\draw [->] (8.5,\m-6)-- ++(0.5,0.5);
\foreach \m in {1,2, 3,4}
		\draw [<-] (8.5,\m-6)-- ++(0.5,-0.5);

\end{tikzpicture}
 \caption{$\omega_0$ and $\omega'_0$ on the fixed configurations for $G=\mathbb{Z}$, $U=\{-1, 0, 1, 2\}$.} \label{fig:orfixedconfigsmulti}
\end{figure}
 
 To check that the last property in \eqref{eq:orprop_coup2} is satisfied, we need to show that in any of the situations listed inside the probability in~\eqref{eq:orspeccoupling}, we have~$\vec{C}_0(A, \omega_0) \subseteq \vec{C}_0(A, \omega'_0)$ for any~$\varnothing\neq A \subseteq \underline{\partial}\mathbb{V}_n$.~$\{X = \hat{x}\}$ entails~$\vec{C}_0(A, \omega_0)=A\cap\overline{\partial}\mathbb{V}_0$ and~$\{X = Y\}$,~$\{X \in \hat{S},\; Y = \hat{x}\}$ as well as~$\{Y = \doublehat{x}\}$ lead to~$\omega_0(e)\leq\omega_0'(e)$ for every~$e\in\vec{\mathbb{E}}_0$. 
 The remaining case is when~$X = \doublehat{x}$ and~$Y = \hat{x}$.  In this case,~$(v_0, n_0)\xrightarrow[]{\omega_0}(v_1, n_1)$ can only happen if~$v_0, v_1 \in U, n_0=0$ and~$n_1=(2K+2)$. But then we also have~$(v_0, n_0)\xrightarrow[]{\omega'_0}(v_1, n_1)$.

Finally, we let~$\vec{\mu}_0$ be the distribution of~$(\omega_0, \omega'_0)$, completing the proof.

\end{proof}

\begin{remark}\label{rem:orproblem}
In the oriented case we cannot find a configuration with similar properties as the one in Remark~\ref{rem:nonor}.
If~$\hat{x}=(\hat{x}^U, \hat{x}^{\partial, 1}, \hat{x}^{\partial, 2})$ is such that~$\hat{x}^U$ contains at least one closed edge, 
depending on the topolgy of~$G\vert_{U}$, the induced subgraph of~$G$ on~$U$, we can find a configuration~$\omega_0\in\vec{\Omega}_0$ and a set~$A\subseteq \underline{\partial} \mathbb{V}_0$ such that
\[
\vec{C}_0(A, (\hat{x}^U, \hat{x}^{\partial, 1} \vee \hat{x}^{\partial, 2})) \nsupseteq \vec{C}_0(A, \omega_0).
\]
In case~$\doublehat{x}=(\doublehat{x}^U, \doublehat{x}^{\partial, 1}, \doublehat{x}^{\partial, 2})$ is such that every edge in~$\doublehat{x}^U$ is open, then we can always find a configuration~$\omega'_0\in\vec{\Omega}_0$ and a set~$B\subseteq \underline{\partial} \mathbb{V}_0$ such that
\[
\vec{C}_0(B, (\doublehat{x}^U, \doublehat{x}^{\partial, 1})) \nsubseteq \vec{C}_0(B, \omega'_0).
\]
(See Figure~\ref{fig:counterex} for  examples).

 \begin{figure}[ht]
\centering
 \begin{tikzpicture}
  \foreach \n in {4.5,8.5}
      \foreach \m in {-0.1,-6.1}
	    \draw (\n,\m)  -- ++(0,0.2) ;

 \draw (3,0)  -- (6.5,0) ;
 \draw (4.5,-0.1)  node [below] {0};
 \draw [dashed] (3.5,0)  -- (3.5,4) ;
  \draw [dashed] (6,0)  -- (6,4) ;
 \draw [dashed] (3.5,4)  -- (6,4) ;
\draw (4.75,-0.5)  node [below] {$\omega_0$};

\foreach \n in {0,0.5,1,1.5, 2}
		\draw [->] (3.5+\n,1+\n)-- ++(0.5,0.5);

 \draw (7,0)  -- (10.5,0) ;
 \draw (8.5,-0.1)  node [below] {0};
 \draw [dashed] (7.5,0)  -- (7.5,4) ;
 \draw [dashed] (10,0)  -- (10,4) ;
 \draw [dashed] (7.5,4)  -- (10,4) ;
\draw (8.75,-0.5)  node [below] {$(\hat{x}^U, \hat{x}^{\partial, 1} \vee \hat{x}^{\partial, 2})$};

\foreach \Point in {(6,3.5), (9.5,4), (8.5,4), (7.5,4), (7.5,3), (7.5,2), (5.5,-2), (4.5,-2), (9.5,-2)}{
    \node at \Point {\textbullet};
}

\foreach \Point in {(3.5,1), (7.5,1), (5.5,-6), (9.5,-6)}{
    \node at \Point {$\circ$};
}

\foreach \n in {7.5,9.5}
	\foreach \m in {0,1,2,3}
		\draw [->] (\n,\m)-- ++(0.5,0.5);
\foreach \n in {7.5,9.5}
	\foreach \m in {1,2,3,4}
		\draw [<-] (\n,\m)-- ++(0.5,-0.5);
\foreach \n in {1,2}
	\foreach \m in {0,1, 2, 3}
		\draw [->] (\n+7.5,\m)-- ++(-0.5,0.5);
\foreach \n in {1,2}
	\foreach \m in {1,3,4}
		\draw [<-] (\n+7.5,\m)-- ++(-0.5,-0.5);
\foreach \m in {0,1,2, 3}
		\draw [->] (8.5,\m)-- ++(0.5,0.5);
\foreach \m in {1, 3,4}
		\draw [<-] (8.5,\m)-- ++(0.5,-0.5);

 \draw (3,-6)  -- (6.5,-6) ;
 \draw (4.5,-6.1)  node [below] {0};
 \draw [dashed] (3.5,-6)  -- (3.5,-2) ;
 \draw [dashed] (6,-6)  -- (6,-2) ;
 \draw [dashed] (3.5,-2)  -- (6,-2) ;
\draw (4.75,-6.5)  node [below] {$(\doublehat{x}^U, \doublehat{x}^{\partial, 1})$};

\foreach \n in {1,2}
	\foreach \m in {0,1, 2, 3}
		\draw [->] (\n+3.5,\m-6)-- ++(-0.5,0.5);
\foreach \n in {1,2}
	\foreach \m in {1,2,3,4}
		\draw [<-] (\n+3.5,\m-6)-- ++(-0.5,-0.5);
\foreach \m in {0,1,2, 3}
		\draw [->] (4.5,\m-6)-- ++(0.5,0.5);
\foreach \m in {1,2, 3,4}
		\draw [<-] (4.5,\m-6)-- ++(0.5,-0.5);

 \draw (7,-6)  -- (10.5,-6) ;
 \draw (8.5,-6.1)  node [below] {0};
 \draw [dashed] (7.5,-6)  -- (7.5,-2) ;
 \draw [dashed] (10,-6)  -- (10,-2) ;
 \draw [dashed] (7.5,-2)  -- (10,-2) ;
\draw (8.75,-6.5)  node [below] {$\omega'_0$};

\foreach \m in {0,1, 2, 3}
		\draw [->] (9.5,\m-6)-- ++(-0.5,0.5);
\foreach \m in {1,2,3,4}
 		\draw [<-] (9.5,\m-6)-- ++(-0.5,-0.5);

\end{tikzpicture}
 \caption{Examples of why we need two configurations in the oriented case. \textbullet\ denotes the vertices of $C_0(\circ, \cdot) \backslash \{\circ\}$ in each configuration ($G=\mathbb{Z}$, $U=\{-1, 0, 1, 2\}$).} \label{fig:counterex}
\end{figure}
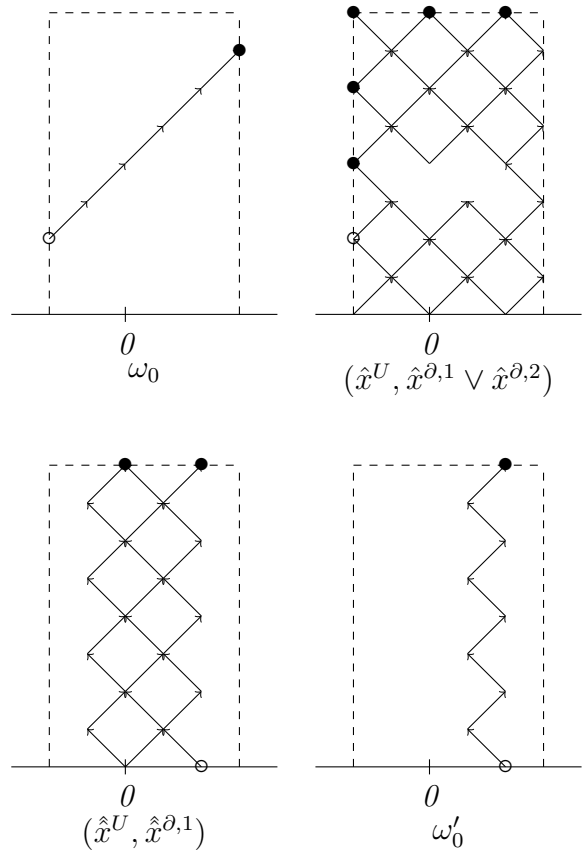

\pagebreak
 This is the reason why we needed to apply Lemma~\ref{lemma:coupling2}, involving two deterministic configurations, to make the coupling work. The trick was to choose~$\hat{x}$ and~$\doublehat{x}$ in a way that for every~$A\subseteq \underline{\partial} \mathbb{V}_0$,
\[
 \vec{C}_0(A, (\doublehat{x}^U, \doublehat{x}^{\partial, 1})) \subseteq \vec{C}_0(A, (\hat{x}^U, \hat{x}^{\partial, 1} \vee \hat{x}^{\partial, 2})).
\]

\end{remark}

 \begin{remark}\label{rem:contact_process}
As mentioned in Section~\ref{ss:contact}, the approach we used to prove Theorem~\ref{thm:orpercmulti} is not readily applicable when the oriented model is replaced by a ``continuous-time'' version such as the contact process. The essential difficulty is that our approach involves finding a configuration that is better than any other in connecting points of any possible boundary set~$A$ to other boundary points.
 In a continuous-time setting, the set of configurations inside a finite box is infinite, so such an optimal configuration cannot exist (in a standard construction involving Poisson processes, one can always introduce extra arrivals between those of a fixed configuration). As a potential strategy, one could attempt to sophisticate our method by partitioning the configuration space not in two, but in infinitely many parts, proving a corresponding version of Lemma~\ref{lemma:coupling2}, and finding a sequence of finer and finer configurations which could produce an effective coupling. 
\end{remark}

\vspace{.5cm}
\textbf{Acknowledgments.}
The authors would like to thank the anonymous referee for the thorough review of the paper and for the insightful suggestions and comments.

\end{document}